\newtheorem{hypo}{Hypothesis}
\newtheorem{prop}[hypo]{Proposition}
\newtheorem{thm}[hypo]{Theorem}
\newtheorem{lem}[hypo]{Lemma}
\newtheorem{defi}[hypo]{Definition}
\newtheorem{rqe}[hypo]{Remark}
\newtheorem{coro}[hypo]{Corollary}
\def\B{\mathcal{B}}
\def\C{\mathcal{C}}
\def\J{{J}}
\newcommand {\refeq}[1] {(\ref{#1})}
\title{On the class of caustics by reflection of planar curves}
\date\today
\author{Alfrederic Josse}
\address{Universit\'e de Brest,
UMR CNRS 6205, Laboratoire de Math\'ematique de Bretagne Atlantique,
6 avenue Le Gorgeu, 29238 Brest cedex, France}
\email{alfrederic.josse@univ-brest.fr}
\author{Fran\c{c}oise P\`ene}
\address{Universit\'e de Brest,
UMR CNRS 6205, Laboratoire de Math\'ematique de Bretagne Atlantique,
6 avenue Le Gorgeu, 29238 Brest cedex, France}
\email{francoise.pene@univ-brest.fr}
\subjclass[2000]{14H50,14E05,14N05,14N10}
\keywords{caustic, class, polar, intersection number, pro-branch,
Pl\"ucker formula\\
Fran\c{c}oise P\`ene is supported by the french ANR project GEODE (ANR-10-JCJC-0108)}
\begin{document}

\begin{abstract}
Given any light position $ S\in\mathbb P^2$ and any algebraic curve $\mathcal C$
of $\mathbb P^2$ (with any kind of singularities), 
we consider the incident lines coming from $ S$
(i.e. the lines containing $ S$)
and their reflected lines after reflection on the mirror curve $\mathcal C$. 
The caustic by reflection $\Sigma_{ S}(\mathcal C)$ is the Zariski closure
of the envelope of these reflected 
lines. We introduce the notion of reflected polar curve and express
the class of $\Sigma_{ S}(\mathcal C)$ in terms of intersection numbers
of $\mathcal C$ with the reflected polar curve, thanks to a fundamental lemma 
established in \cite{fredsoaz1}. This approach enables us
to state an explicit formula for the class of $\Sigma_{ S}(\mathcal C)$
in every case in terms of intersection numbers of the
initial curve $\mathcal C$.
\end{abstract}

\maketitle
\section*{Introduction}
Let $\mathbf{V}$ be a three dimensional complex vector space
endowed with some fixed basis.
We consider a light point $\mathcal S[x_0:y_0:z_0]
\in\mathbb P^2:=\mathbb P(\mathbf{V})$ and 
a mirror given by an irreducible algebraic curve $\mathcal C=V(F)$ of 
$\mathbb P^2$, with $F\in Sym^d(\mathbf{V}^\vee)$
($F$ corresponds to a polynomial of degree $d$ in $\mathbb C[x,y,z]$).
We denote by $d^\vee$ the class of $\mathcal C$.
We consider the caustic by reflection $\Sigma_S(\mathcal C)$  of the mirror curve 
$\mathcal C$ with source point $ S$.
Recall that $\Sigma_ S(\mathcal C)$
is the Zariski closure of the 
envelope of the reflected
lines associated to the incident lines coming from $ S$ after reflection off $\mathcal C$.
When $S$ is not at infinity,
Quetelet and Dandelin \cite{Quetelet,Dandelin} proved that the caustic 
by reflection $\Sigma_{ S}(\mathcal C)$ 
is the evolute of the $ S$-centered homothety (with
ratio 2) of the pedal  of $\mathcal C$ from $ S$
(i.e. the evolute of the orthotomic of $\C$ with respect to $S$).
This decomposition has also been used
in a modern approach by \cite{BGG1,BGG2,BG} to study the source genericity 
(in the real case). In \cite{fredsoaz1} we stated formulas for the
degree of the caustic by reflection of planar algebraic curves.

In \cite{Chasles}, Chasles proved that the class of $\Sigma_ S(\mathcal C)$ is equal
to $2d^\vee+d$ for a generic $(\mathcal C, S)$. 
In \cite{Brocard-Lemoyne}, Brocard and Lemoyne gave (without any proof) a 
more general formula only when 
$ S$ is not at infinity. The Brocard and Lemoyne formula 
appears to be the direct composition of formulas got by 
Salmon and Cayley in \cite[p. 137, 154]{Salmon-Cayley} 
for some geometric characteristics of evolute and pedal curves.
The formula given by Brocard and Lemoyne is not satisfactory for the following reasons.
The results of Salmon and Cayley apply only to curves having
no singularities other than ordinary nodes and cusps \cite[p. 82]{Salmon-Cayley},
but the pedal of such a curve is not necessarily 
a curve satisfying the same properties. 
For example, the pedal curve of the rational cubic $V(y^2z-x^3)$ from $[4:0:1]$
is a quartic curve with a triple ordinary point.
Therefore it is not correct to compose directly the formulas got 
by Salmon and Cayley as Brocard and Lemoyne apparently did (see also Section \ref{append1}
for a counterexample of the Brocard and Lemoyne formula for the class of the caustic by reflection).

Let us mention some works on the evolute and on its generalization in higher dimension 
\cite{Fantechi,Trifogli,CataneseTrifogli}. In \cite{Fantechi}, Fantechi gave a necessary
and sufficient condition for the birationality of the evolute of a curve and 
studied the number and type of the singularities of the general evolute.
Let us insist on the fact that there exist irreducible algebraic curves (other than
lines and circles) for which the evolute map is not birational.
This study of evolute is generalized in higher dimension by Trifogli in \cite{Trifogli} and by
Catanese and Trifogli \cite{CataneseTrifogli}.

The aim of the present paper is to give a formula for the class (with multiplicity) 
of the caustic by
reflection for any algebraic curve $\mathcal C$ (\textbf{without any restriction neither on 
the singularity 
points nor on the flex points}) and {\bf for any light position} $ S$ 
(including the case when $ S$ is at infinity or when $ S$ is on the curve $\mathcal C$).

In Section \ref{sec1}, we define the reflected lines $\mathcal R_m$
at a generic $m\in\mathcal C$ and the (rational) ``reflected map''
$R_{\mathcal{C},S}:\mathbb P^2\rightarrow\mathbb P^2$
mapping a generic $m\in\mathcal C$ to the equation of $\mathcal R_m$.

In section \ref{sec2}, we define the caustic by reflection $\Sigma_{ S}(\mathcal C)$, we give conditions ensuring that 
$\Sigma_{ S}(\mathcal C)$ is an irreducible curve and we
prove that its class is the degree of the image of $\mathcal C$
by $R_{\mathcal{C},S}$.

In section \ref{sec3},
we give formulas for the class of caustics by reflection
valid for any $(\mathcal C, S)$. These formulas describe
precisely how the class of the caustic
depends on geometric invariants of $\mathcal C$ and also on the relative
positions of $ S$ and of the two cyclic points $ I, J$ with respect to $\mathcal C$. 
As a consequence of this result, we obtain the following formula for the class of $\Sigma_{ S}(\mathcal C)$ valid for
any $\mathcal C$ of degree $d\ge 2$ and for a generic source position $ S$:
$$class(\Sigma_{ S}(\mathcal C))=2d^\vee+d-\Omega(\mathcal C,\ell_\infty)-\mu_I(\C)-\mu_J(\C),$$
where $\Omega(\mathcal C,\ell_\infty)$ is the contact number of $\mathcal C$ with the line at infinity $\ell_\infty$
and with $\mu_I(\C)$ and $\mu_J(\C)$ are the multiplicities number of respectively $I$ and $J$ on $\mathcal C$.

In Section \ref{exemples}, our formulas are illustrated on two examples of curves
(the lemniscate of Bernoulli and the quintic considered in \cite{fredsoaz1}).

In section \ref{append1}, we compare our formula with the one given by Brocard and Lemoyne for a light position not at infinity.
We also give an explicit counter-example to their formula.

In Section \ref{proof}, we prove our main theorem. In a first time, we give a formula for the class of the caustic in terms of intersection numbers of $\mathcal C$ with a generic ``reflected polar'' at the base points of $R_{\mathcal{C},S}$. In a second time, we compute these intersection numbers in terms of the degree $d$ and of the class $d^\vee$ of $\mathcal C$
but also in terms of intersection numbers of $\mathcal C$ with
each line of the triangle $I J S$.

In appendix \ref{append2}, we prove a useful formula expressing 
the classical intersection number in terms of probranches.
\section{Reflected lines $\mathcal R_m$ and rational map $R_{\mathcal{C},S}$}\label{sec1}
Recall that we consider a light position $ S[x_{0}:y_{0}:z_{0}]\in {\mathbb P}^{2}$
and an irreducible algebraic (mirror) curve $\mathcal{C}=V(F)$ of $\mathbb{P}^{2}$
given by  a homogeneous polynomial $F\in Sym^d(\mathbf{V})$ with
$d\ge 2$. 
We write $Sing(\C)$ for the set of singular points of $\C$.
For any non singular point $m$, we write $\mathcal T_m\mathcal  C$ for the tangent line to $\mathcal C$ at $m$.
We set $\mathbf{S}(x_0,y_0,z_0)\in\mathbf{V}\setminus\{\mathbf{0}\}$.
For any $m[x:y:z]\in\mathbb P^2$, we write $\mathbf{m}(x,y,z)
\in\mathbf{V}\setminus\{\mathbf{0}\}$.
We write as usual $\ell_\infty=V(z)\subset\mathbb P^2$ for the line at infinity.
For any $\mathbf{P}(x_1,y_1,z_1)\in\mathbf{V}\setminus\{\mathbf{0}\}$, we define
$$\Delta_{\mathbf{P}}F:=x_1F_x+y_1F_y+z_1F_z\in Sym^{d-1}(\mathbf{V}^\vee).$$
Recall that $V(\Delta_{\mathbf P}F)$ is the polar curve of
$\mathcal C$ with respect to $P[x_1:y_1:z_1]\in\mathbb P^2$.

Since the initial problem is euclidean, we endow $\mathbb{P}^{2}$ with an angular
structure for which ${I }[1:i:0]\in\mathbb P^2$ and ${ J}[1:-i:0]\in\mathbb P^2$ 
play a particular role. To this end, let
us recall the definition of the cross-ratio $\beta$ of 4 points of $\ell_\infty$. 
Given four points
$(P_i[a_i:b_i:0])_{i=1,...,4}$ such that each point appears at most 2 times,
we define the cross-ratio $\beta(P_1,P_2,P_3,P_4)$ of these four points as follows~:
\begin{equation*}
\beta (P_{1},P_{2},P_{3},P_{4})=\frac{(b_3a_1-b_1a_3)(b_4a_2-b_2a_4)}{
    (b_3a_2-b_2a_3)(b_4a_1-b_1a_4)},
\end{equation*}
with convention $\frac{1}{0}=\infty $.
For any distinct lines $\mathcal{A}_1$ and $\mathcal{A}_2$ not equal to $\ell_\infty$, containing
neither $I$ nor $ J$, 
we define the oriented angular measure between $\mathcal{A}_1$ and $\mathcal{A}_2$ by $\theta$ (modulo $\pi \mathbb Z$)
such that 
$$e^{-2i\theta}=\beta(P_1,P_2,{ I},{ J})
=\frac{(a_1+ib_1)(a_2-ib_2)}
      {(a_1-ib_1)(a_2+ib_2)}$$
(where $P_i[a_i:b_i:0]$ is the point at infinity of $\mathcal{A}_i$).
Let $Q\in Sym^2(\mathbf{V}^\vee)$ be defined by
$Q(x,y):=x^2+y^2$. It will be worth noting that 
$Q(\nabla F)=F_x^2+F_y^2=\Delta_IF\Delta_JF$.
For every non singular point $m$ of $\mathcal C\setminus\ell_\infty$,
we recall that 
$t_{m}[F_{y}:-F_{x}:0]\in\mathbb P^2$ is the point at infinity of ${\mathcal T}_{m}\mathcal{C}$ and so $t_m\not\in\{I,J\}$
is equivalent to $m\not\in V(Q(\nabla F))$.

Now, for any $m\in\mathcal C\setminus(\ell_\infty\cup
Q(\nabla F))$ and  any incident line $\ell$ containing $m$,
we define as follows the associated reflected line 
$\mathfrak{R}_m(\ell)$ (for the reflexion on $\mathcal C$ at $m$
with respect to the Snell-Descartes reflection law
$Angle(\ell,\mathcal T_m)=Angle(\mathcal T_m,\mathfrak{R}_m)$).
\begin{defi}
For every
$m\in{\mathcal C}\setminus 
(\ell_\infty\cup V(Q(\nabla F)))$, we define $\mathfrak{r}_m:\ell_\infty\rightarrow\ell_\infty$ mapping $P\in\ell_\infty$
to the unique $\mathfrak{r}_m(P)$ such that $\beta(P,t_m,I,J)=\beta(t_m,\mathfrak{r}_m(P),I,J)$.

We define $\mathfrak{R}_m:\mathcal F_m\rightarrow\mathcal F_m$
with $\mathcal F_m:=\{\ell\in G(1,\mathbb P^2),\ \ m\in\ell\}$
by $\mathfrak{R}_m(\ell)=(m\, \mathfrak{r}_m(P_\ell))$ if $P_\ell$
is the point at infinity of $\ell$.
\end{defi}
We have (on coordinates)
$$ \mathfrak{r}_m([x_1:y_1:0])=
[x_1(F_x^2-F_y^2)+2y_1F_xF_y:-y_1(F_x^2-F_y^2)+2x_1F_xF_y:0]$$
\begin{rqe}
Observe that $\mathfrak{r}_m$ is an involution on $\ell_\infty\cong \mathbb P^1$
with exactly two fixed points $t_m$ and $n_m[F_x:F_y:0]$.
As a consequence,  
$\mathfrak{R}_m$ is an involution with two fixed points $\mathcal T_m(\mathcal C)$ and $\mathcal N_m(\mathcal C):=(mn_m)$ the normal line to $\mathcal C$ at $m$.

Moreover $\mathfrak{r}_m(I)=J$ and $\mathfrak{r}_m(J)=I$.
\end{rqe}
\begin{defi}\label{reflected}
For any $m[x:y:z]\in\mathcal C\setminus(\{ S\}\cup\ell_\infty\cup
V(Q(\nabla F))$ we define the {\bf reflected line} $\mathcal{R}_m$ on
$\mathcal C$ at $m$
(of the incident line coming from
$ S$) as the line $\mathcal R_m:=\mathfrak{R}_m((mS))$.
\end{defi}
For  $m[x:y:z]\in\mathcal C\setminus(\{ S\}\cup\ell_\infty\cup
V(Q(\nabla F))$, the point at infinity of $({ S}m)$
is $s_m[ x_0z-z_0x: y_0z-z_0y:0]$.
Due to the Euler identity, on $\mathcal C$, we have
$xF_x+yF_y+zF_z=0$ and so $(x_0z-z_0x)F_x+(y_0z-z_0y)F_y=z\Delta_{\mathbf S}F$. Hence 
$\mathfrak{r}(s_m)=[-v_{\mathbf{m}}:u_{\mathbf{m}}:0]$  and the reflected line
$\mathcal{R}_m$ is the set of $P[X:Y:Z]\in\mathbb P^2$
such that $u_{\mathbf m}X+v_{\mathbf m}Y+w_{\mathbf{m}}Z=0$, with 
\begin{eqnarray*}
u_{\mathbf{m}}&:=&(z_0y-zy_0)(F_x^2+F_y^2)+2z\Delta_{\mathbf{S}}F.F_y\in Sym^{2d-1}(\mathbf{V}^\vee)\\
v_{\mathbf{m}}&:=&(zx_0-z_0x)(F_x^2+F_y^2)-2z\Delta_{\mathbf{S}}F.F_x
\in Sym^{2d-1}(\mathbf{V}^\vee)\\
w_{\mathbf{m}}&:=&\frac{-xu_m-yv_m}z=(xy_0-yx_0)(F_x^2+F_y^2)-2\Delta_{\mathbf{S}}F(xF_y-yF_x)\in Sym^{2d-1}(\mathbf{V}^\vee).
\end{eqnarray*}
\begin{defi}
We call {\bf reflected map of $\mathcal C$ from $S$} the following rational map $$R_{\mathcal{C},S}:\begin{array}{ccc}\mathbb P^2&\rightarrow&
\mathbb P^2\\
m&\mapsto&[u_{\mathbf{m}}:v_{\mathbf{m}}:w_{\mathbf{m}}]
\end{array}.$$

We also define
the rational map $T_{\C,S}:=(R_{\mathcal{C},S})_{|\mathcal C}:\mathcal C\rightarrow \mathbb P^2$.
\end{defi}
For any $\mathbf{m}\in\mathbf{V}$, it will be useful
to define $\mathbf{R}_{F,\mathbf{S}}(\mathbf{m}):=(u_{\mathbf m},
v_{\mathbf m},w_{\mathbf m})\in\mathbf{V}$ and to notice that
$$\mathbf{R}_{F,\mathbf{S}}(\mathbf{m})=Q(\nabla F(\mathbf{m}))\cdot
(\mathbf{m}\wedge \mathbf{S})-2\Delta_{\mathbf S}F(\mathbf{m})\cdot
\left(\mathbf{m}\wedge\mathbf{n}_{\mathbf{m}}\right)\in\mathbf V, $$
with\footnote{with $\wedge:\mathbf V\times\mathbf V\rightarrow\mathbf V$ being given in coordinates by $(x_1,y_1,z_1)\wedge(x_2,y_2,z_2)= \left(\begin{array}{c}
z_2y_1-z_1y_2\\
z_1x_2-z_2x_1\\
x_1y_2-y_1x_2\\
\end{array}\right)$.}
 $\mathbf{n}_{\mathbf{m}}(F_x(\mathbf{m}),F_y(\mathbf{m}),0)\in\mathbf{V}$.
\begin{prop}
The base points of $T_{\C,S}$ are the following:

$ I$, $ J$, $ S$ (if these points are in $\mathcal C$), 
the singular points of $\mathcal C$ and the points of tangency
of $\mathcal C$ with some line  of the triangle $( IJS)$.
\end{prop}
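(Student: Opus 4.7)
I will work directly with the vector expression
\[
\mathbf{R}_{F,\mathbf{S}}(\mathbf{m}) = Q(\nabla F(\mathbf{m}))\,(\mathbf{m}\wedge \mathbf{S}) - 2\Delta_{\mathbf{S}}F(\mathbf{m})\,(\mathbf{m}\wedge \mathbf{n}_{\mathbf{m}})
\]
displayed just before the statement, and characterise the $m\in\C$ at which this vanishes. The key algebraic ingredients are: (i) the factorisation $Q(\nabla F)=\Delta_I F\cdot\Delta_J F$; (ii) Euler's identity $xF_x+yF_y+zF_z=dF$, which in particular gives $\Delta_{\mathbf{P}}F(\mathbf{P})=dF(\mathbf{P})$; (iii) polar duality, which for a smooth $m\in\C$ states that $\Delta_{\mathbf{P}}F(\mathbf{m})=0$ iff $P\in\mathcal{T}_m\C$; and (iv) the coordinate formula $\mathbf{m}\wedge\mathbf{n}_{\mathbf{m}}=(-zF_y,zF_x,xF_y-yF_x)$, whose vanishing loci we can read off immediately.

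For the forward direction, I check case by case that each point in the list makes $\mathbf{R}_{F,\mathbf{S}}(\mathbf{m})=0$. At a singular point of $\C$ all partial derivatives vanish and both summands die. At $m=S\in\C$ one has $\mathbf{S}\wedge\mathbf{S}=0$ and $\Delta_{\mathbf{S}}F(\mathbf{S})=dF(\mathbf{S})=0$ by Euler. At $m=I\in\C$, Euler applied at $I$ gives $\Delta_I F(I)=0$, so $Q(\nabla F)(I)=\Delta_I F(I)\cdot\Delta_J F(I)=0$, and a direct computation yields $\mathbf{I}\wedge\mathbf{n}_{\mathbf{I}}=(0,0,F_y(I)-iF_x(I))=(0,0,-i\Delta_I F(I))=0$; the case $m=J$ is identical. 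For tangency of $\C$ with $(IJ)=\ell_\infty$ one has $F_x(m)=F_y(m)=0$, killing $Q(\nabla F)(\mathbf{m})$ and $\mathbf{n}_{\mathbf{m}}$. For tangency with $(IS)$, the tangent coefficients at $m$ are proportional to $\mathbf{I}\wedge\mathbf{S}=(iz_0,-z_0,y_0-ix_0)$, from which one reads off $\Delta_I F(\mathbf{m})=0$ and $\Delta_{\mathbf{S}}F(\mathbf{m})=0$; similarly for $(JS)$.

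For the converse, suppose $m\in\C$ is a base point with $m\notin\{I,J,S\}\cup Sing(\C)$, and set $\alpha:=Q(\nabla F)(\mathbf{m})$, $\beta:=\Delta_{\mathbf{S}}F(\mathbf{m})$. I first prove $\alpha=0$: if $\alpha\ne 0$, the base-point condition rewrites as $\mathbf{m}\wedge\mathbf{S}=(2\beta/\alpha)(\mathbf{m}\wedge\mathbf{n}_{\mathbf{m}})$, and since $m\ne S$ the left-hand side is nonzero, which forces $(mS)=\mathcal{N}_m$. Writing $\mathbf{S}=a\mathbf{m}+b\mathbf{n}_{\mathbf{m}}$ with $b\ne 0$, a direct expansion combined with Euler's identity on $\C$ gives $\beta=b\alpha$; the vanishing equation then reads $\alpha b=2\beta=2b\alpha$, hence $b\alpha=0$, forcing $b=0$ and the contradiction $m=S$. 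Hence $\alpha=0$, reducing the equation to $\beta(\mathbf{m}\wedge\mathbf{n}_{\mathbf{m}})=0$. If $\mathbf{m}\wedge\mathbf{n}_{\mathbf{m}}=0$, the coordinate formula above either forces $F_x(m)=F_y(m)=0$ (so $\mathcal{T}_m\C=\ell_\infty=(IJ)$) or forces $m\in\ell_\infty$ with $(F_x,F_y)\parallel(x,y)$, which combined with $\alpha=0$ gives $m\in\{I,J\}$, excluded. If $\beta=0$, then $\alpha=\Delta_I F\cdot\Delta_J F=0$ gives $\Delta_I F(\mathbf{m})=0$ or $\Delta_J F(\mathbf{m})=0$, so polar duality places $I$ or $J$ on $\mathcal{T}_m\C$; together with $\beta=0$ which places $S$ on $\mathcal{T}_m\C$, this forces $\mathcal{T}_m\C\in\{(IS),(JS),(IJ)\}$.

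The main technical obstacle is the case $\alpha\ne 0$ in the converse: it requires identifying, from the proportionality of $\mathbf{m}\wedge\mathbf{S}$ and $\mathbf{m}\wedge\mathbf{n}_{\mathbf{m}}$, that $S$ lies on the normal line at $m$, and then exploiting the Euler-based identity $\beta=b\alpha$ on $\C$ to force the degenerate conclusion $m=S$. Once this obstruction is removed, the remaining cases reduce to a transparent analysis based on the factorisation of $Q(\nabla F)$ and polar duality.
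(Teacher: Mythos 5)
Your proof is correct and follows essentially the same route as the paper: both characterise the vanishing of $\mathbf{R}_{F,\mathbf{S}}(\mathbf{m})$ on $\mathcal C$ by combining Euler's identity with the factorisation $Q(\nabla F)=\Delta_{\mathbf I}F\,\Delta_{\mathbf J}F$ and then running the same case analysis. The only real difference is local: where you rule out $Q(\nabla F)(\mathbf{m})\ne 0$ by placing $S$ on the normal line and decomposing $\mathbf{S}$ in the basis $(\mathbf{m},\mathbf{n}_{\mathbf{m}})$, the paper gets the key constraint $\Delta_{\mathbf S}F(\mathbf{m})\cdot Q(\nabla F(\mathbf{m}))=0$ in one stroke by pairing the colinearity relation with $DF(\mathbf{m})$ and using Euler's identity; your forward inclusion is simply a more detailed version of what the paper declares obvious.
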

\begin{proof}
We have to prove that the set of base points of $T_{\C,S}$ is the following set: $\mathcal M:=\C\cap(\{I,J,S\}\cup V(\Delta_{\mathbf S}F,
Q(\nabla F))\cup V(F_x,F_y))$. We just prove that $Base(T_{\C,S})
\subset\mathcal M$, the converse being obvious (observe that if $m\in\{I,J\}$, we automatically have $Q(\nabla F(\mathbf{m}))=0$ and
$\mathbf{n}_{\mathbf m}\in Vect(\mathbf{m})$).
Let $m[x;y;z]\in\C$ be such that $\mathbf{R}_{F,\mathbf{S}}(\mathbf{m})=
\mathbf{0}$. Then $\mathbf{m}$ and $Q(\nabla F(\mathbf{m}))\cdot
 \mathbf{S}-2\Delta_{\mathbf S}F(\mathbf{m})\cdot
\mathbf{n}_{\mathbf{m}}$ are colinear.
Due to the Euler identity, we have $0=DF(\mathbf{m})\cdot\mathbf{m}$
(with $DF(\mathbf{m})$ the differential of $F$ at $\mathbf{m}$)
and so $0=-\Delta_{\mathbf S}F(\mathbf{m})\cdot Q(\nabla F(\mathbf{m}))$
since $DF(\mathbf{m})\cdot \mathbf{S}=\Delta_{\mathbf S}F(\mathbf{m})$
and since $DF(\mathbf{m})\cdot\mathbf{n}_{\mathbf{m}}=Q(\nabla F(\mathbf{m}))$. Hence $\Delta_{\mathbf S}F(\mathbf{m})= 0$
or $Q(\nabla F(\mathbf{m}))=0$.

If $\Delta_{\mathbf S}F(\mathbf{m})= 0$, then either $Q(\nabla F(\mathbf{m}))=0$ or $m=S$.

If $\Delta_{\mathbf S}F(\mathbf{m})\ne 0$ and $Q(\nabla F(\mathbf{m}))=0$ , then $F_x(\mathbf{m})=F_y(\mathbf{m})=0$ or $m=[F_x(\mathbf{m}):F_y(\mathbf{m}):0]$. Assume that $m=[F_x(\mathbf{m}):F_y(\mathbf{m}):0]$.
Then, since $Q(\nabla F(\mathbf{m}))=0$, we conclude that $m\in\{I,J\}$.
\end{proof}
In the following result, we state the $S$-generic birationality
of $T_{\C,S}$. We give a short version of the proof of \cite{fredsoaz3}.
Let us indicate that another proof of the same result has been established at the same period by Catanese in \cite{Catanese}.
\begin{prop}[see also \cite{fredsoaz3,Catanese}]\label{birationalite}
Let $\mathcal C$ be an irreducible curve of degree $d\ge 2$.
Then, for a generic $ S\in\mathbb P^3$, the map
$T_{\C,S}$ is birational.
\end{prop}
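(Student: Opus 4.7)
The plan is a dimension count on the incidence variety
\[
\tilde W := \{(S,m,m') \in \mathbb P^2 \times \mathcal C \times \mathcal C : m \ne m',\ T_{\mathcal C,S}(m) = T_{\mathcal C,S}(m')\},
\]
with $m, m'$ restricted to lie off the finite base locus from the previous proposition. If $T_{\mathcal C,S}$ had generic degree $k \ge 2$, then a generic line in its image would admit $k$ preimages in $\mathcal C$, so the fibre of $\tilde W \to \mathbb P^2$ over such an $S$ would contain a $1$-dimensional family of pairs, forcing $\dim \tilde W \ge 3$. The whole task is therefore to show $\dim \tilde W \le 2$, after which contraposition delivers the birationality for generic $S$.

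To bound $\dim \tilde W$ I would project instead onto the factor $\mathcal C \times \mathcal C$. Fix distinct smooth points $m, m' \in \mathcal C \setminus (\ell_\infty \cup V(Q(\nabla F)))$ and suppose $\mathcal R_m = \mathcal R_{m'}$; since $m \in \mathcal R_m$ and $m' \in \mathcal R_{m'}$, this common line must be $(mm')$ itself. The involution $\mathfrak R_m$ (Remark after the first definition) then forces the incident line $(Sm)$ to be $\mathfrak R_m((mm'))$, pinning $S$ to a single line $L_m$ through $m$; symmetrically $S$ lies on a line $L_{m'}$ through $m'$. A coincidence $L_m = L_{m'}$ would force $L_m = L_{m'} = (mm')$, i.e.\ $(mm')$ is a fixed line of both $\mathfrak R_m$ and $\mathfrak R_{m'}$, hence simultaneously tangent or normal to $\mathcal C$ at both endpoints. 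Each of the conditions ``$(mm')$ tangent or normal at $m$'' and its analogue at $m'$ cuts out a proper closed subvariety of $\mathcal C \times \mathcal C$, so their intersection has dimension $\le 1$. Over its open complement the fibre of $\tilde W \to \mathcal C \times \mathcal C$ is the single point $L_m \cap L_{m'}$, and over the exceptional locus it is a line; altogether $\dim \tilde W \le 2$.

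Together with the elementary fact that the image of $T_{\mathcal C,S}$ is one-dimensional for generic $S$ (so that the notion of generic degree is meaningful), which follows from the explicit formulas for $u_{\mathbf m}, v_{\mathbf m}, w_{\mathbf m}$ combined with $d \ge 2$ and the irreducibility of $\mathcal C$, the contrapositive yields $\deg(T_{\mathcal C,S}) = 1$ on a non-empty Zariski-open subset of $\mathbb P^2$. The main obstacle I anticipate is making the bound $\dim E \le 1$ on the exceptional locus $E := \{(m,m') : L_m = L_{m'}\}$ completely rigorous: one must check that $E \ne \mathcal C \times \mathcal C$, namely that $\mathcal C$ does not degenerate to a configuration in which every secant would be tangent or normal at both endpoints, a verification that should be straightforward for an irreducible $\mathcal C$ of degree $d \ge 2$ but deserves explicit care.
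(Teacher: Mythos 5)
Your proof is correct, and it rests on exactly the same geometric dichotomy as the paper's: if $\mathcal R_{m,S}=\mathcal R_{m',S}$ with $m\ne m'$, the common line must be $(mm')$, hence $S\in\mathfrak{R}_m((mm'))\cap\mathfrak{R}_{m'}((mm'))$, and the degenerate case where these two lines coincide forces $(mm')$ to be tangent or normal to $\mathcal C$ at both endpoints. Where you genuinely diverge is in the globalization. The paper fixes $m$, shows that the set $K_m$ of bad sources for that $m$ is contained in the explicit curve $\mathcal T_m\mathcal C\cup\mathcal N_m\mathcal C\cup\overline{\tau_m(\mathcal C)}$ of degree at most $2d^2+2$ (where $\tau_m(m')=\mathfrak{R}_m((mm'))\cap\mathfrak{R}_{m'}((mm'))$), and then intersects the $\bar K_m$ over all but finitely many $m$ via a Zorn/Noetherian argument; this buys an effective degree bound on the exceptional locus of source positions. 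You instead run a fibre-dimension count on the incidence variety $\tilde W$ projected to $\mathcal C\times\mathcal C$: single-point fibres off the exceptional locus $E$, line fibres over $E$, and $\dim E\le 1$ because for generic $m$ only finitely many $m'$ lie on $\mathcal T_m\mathcal C\cup\mathcal N_m\mathcal C$ (here irreducibility and $d\ge 2$ rule out $\mathcal C$ containing a line, which settles the worry you raise at the end). This is shorter and more standard, at the cost of leaning silently on Chevalley-type constructibility and semicontinuity of fibre dimension, and on the separately established fact (Propositions 2.2 and 2.3 of the paper) that the image of $T_{\mathcal C,S}$ is a curve for generic $S$, which you correctly flag as a needed input.
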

\begin{proof}
For every $m\in\mathcal C_0:=\mathcal C\setminus(\ell_\infty\cup 
V(Q(\nabla F)))$ and every $ S\in\mathbb P^2\setminus\{m\}$,
we write $\mathcal R_{m, S}$ for the reflected line $\mathfrak{R}_m((mS))$.
For every $m\in\mathcal C_0$,
we consider the set
$K_m:=\{ S\in\mathbb P^2\setminus\mathcal C: \exists m'\in\mathcal C_0\setminus\{m\},\ \mathcal R_{m, S}=\mathcal R_{m', S}\}.$ 
\begin{itemize}
\item Let us prove that, for any $m\in\mathcal C_0$, $K_m$ is contained in an algebraic curve $\bar K_m$ of degree less than $2 d^2+2$.

Let $m\in\mathcal C_0$.
Consider $ S\in \mathbb P^2\setminus\mathcal C$ and $m'\in \mathcal C_0
\setminus\{m\}$ such that $\mathcal R_{m, S}=\mathcal R_{m', S}$. Then we have $\mathcal R_{m, S}=\mathcal R_{m', S}=(mm')$ and so $\mathcal S\in\mathcal R_{m,m'}\cap \mathcal R_{m',m}$.

Assume first that $\mathcal R_{m,m'}= \mathcal R_{m',m}$. Then 
this line is $(mm')$ and it is its own reflected line
both at $m$ and at $m'$. This implies that $(mm')$ is either $\mathcal T_m\mathcal C$ or $\mathcal N_m\mathcal C$, so that
$\mathcal S\in\mathcal T_m\mathcal C\cup \mathcal N_m\mathcal C$.

Assume now that $\mathcal R_{m,m'}\ne\mathcal R_{m',m}$.
Then $S=\tau_m(m')$ with
$\tau_m:\mathbb P^2\rightarrow\mathbb P^2$ 
the rational map associated to $\boldsymbol{\tau}_{\mathbf{m}}
:\mathbf{V}\rightarrow\mathbf{V}$
with
$\boldsymbol{\tau}_{\mathbf{m}}(\mathbf{m'})=\mathbf{R}_{F,\mathbf{m'}}(\mathbf{m})\wedge \mathbf{R}_{F,\mathbf{m}}(\mathbf{m'})$. 
Hence $K_m\subseteq \bar K_m:=
\mathcal T_m\mathcal C\cup \mathcal N_m\mathcal C\cup \overline{\tau_m(\mathcal C)}$,
where $\overline A$ is the Zariski closure of a set $A$.
Since the degree (in $m'$) of the coordinates of $\boldsymbol{\tau}_{\mathbf{m}}$
is $2d$, we conclude that $\deg \bar K_m\le 2d^2+2$.

\item The set $K$ of points $ S\in\mathbb P^2 \setminus\mathcal C$ 
such that $R_{\mathcal{C},S}$ is not birational is contained in
$$\bar K:=\bigcup_{E\subset \mathcal C_0:\# E<\infty}
  \bigcap_{m\in \mathcal C_0\setminus E}\bar K_m.$$
To conclude we will apply the Zorn lemma.
We have to prove that 
$\{\bigcap_{m\in \mathcal C_0\setminus E}\bar K_m,\ \# E<\infty\}$
is inductive for the inclusion. Let
$(\mathcal F_j:=\bigcap_{m\in \mathcal C_0\setminus E_j}\bar K_m)_{j\ge 1}$ be an increasing sequence of sets (with $E_j$ finite subsets of $\mathcal C_0$). Write $Z$ for the union of these sets.
Observe that $Z\subseteq \bar K_{m_0}$ for some fixed $m_0\in
\mathcal C_0\setminus\bigcup_{i\ge 1}E_i$. The set $\bar K_{m_0}$
is the union of irreducible algebraic curves $C_1,...,C_p$.
We write $d_i$ for the degree of $C_i$. If $C_i\subseteq Z$,
we write $N_i:=\min\{j\ge 1:C_i\subset\mathcal F_j\}$.
If $C_i\not\subseteq Z$, then $(C_i\cap\mathcal F_j)_{j\ge 1}$
is an increasing sequence of finite sets containing at most
$d_i(2d^2+2)$ points and we set $N_i:=\min\{j:(C_i\cap Z)\subseteq\mathcal F_j\}$. We obtain $Z=\mathcal F_{\max(N_1,...,N_p)}$.
Due to the Zorn lemma, there exists a finite set
$E_0$ such that 
$ K\subset\bigcap_{m\in \mathcal C_0\setminus E_0}\bar K_m$,
from which the result follows.
\end{itemize}
\end{proof}
\section{Caustic by reflection}\label{sec2}
\begin{defi}
The {\bf caustic by reflection} $\Sigma _{ S}(\mathcal{C})$ is the 
Zariski closure of the envelope of the
reflected lines $\{{\mathcal R}_{m};m\in \mathcal{C}\setminus
(\{S\}\cup\ell_\infty\cup V(Q(\nabla F))\}$.
\end{defi}
Recall that, in \cite{fredsoaz1}, we have defined a rational map
$\Phi_{F,\mathbf{S}}$ called \textbf{caustic map} mapping a generic $m\in\mathcal C$ to the point of tangency of $\Sigma_{S}
 (\mathcal C)$ with $\mathcal R_m$ and that $\Sigma_S(\C)$ is the Zariski closure of $\Phi_{F,\mathbf{S}}(\C)$.

In the present work, we will not consider the cases in which the caustic by reflection
$\Sigma_S(\mathcal C)$ is a single point.
We recall that these cases are easily characterized as follows.
\begin{prop}\label{nontrivial}
Assume that 
\begin{itemize}
\item[(i)] $S\not\in\{I,J\}$, 
\item[(ii)] $\C$ is not a line (i.e. $d\ne 1$),
\item[(iii)] if $d=2$, then $S$ is not a focus of the conic $\C$.
\end{itemize}
Then $\Sigma_S(\mathcal C)$ is not reduced to a point and is an irreducible curve.
\end{prop}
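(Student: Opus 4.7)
The plan is to handle the two claims of the proposition separately, using the characterization $\Sigma_S(\C) = \overline{\Phi_{F,\mathbf S}(\C)}$ recalled just before the statement. Irreducibility of $\Sigma_S(\C)$ follows at once: $\C$ is irreducible by hypothesis, $\Phi_{F,\mathbf S}$ is a rational map, so its image lies in an irreducible subvariety of $\mathbb P^2$, and taking the Zariski closure preserves irreducibility.

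For the claim that $\Sigma_S(\C)$ is not reduced to a point, I argue by contradiction: suppose $\Sigma_S(\C) = \{T\}$ for some $T \in \mathbb P^2$, so every reflected line $\mathcal R_m$ passes through $T$ (for $m$ outside the base points of $T_{\C,S}$). The first step is to handle the ``easy'' positions of $T$. If $T = I$: at a generic $m$ the reflected line $\mathcal R_m = (mI)$ has direction $I$, so $\mathfrak{r}_m(s_m) = I$; since $\mathfrak{r}_m$ swaps $I$ and $J$, this forces $s_m = J$, hence $m \in (SJ)$, giving $\C \subseteq (SJ)$ and contradicting $d \ge 2$ with $\C$ irreducible. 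The case $T = J$ is symmetric. If $T = S$, then $(mS)$ is fixed by $\mathfrak{R}_m$, so at every $m \in \C$ one has $(mS) = \mathcal T_m\C$ or $(mS) = \mathcal N_m\C$; by irreducibility of $\C$, one of these holds on all of $\C$. The first alternative says all tangents of $\C$ pass through $S$, forcing $\C$ to be a line and contradicting~(ii); the second characterizes $\C$ as a circle with center $S$, which is a conic with focus $S$, contradicting~(iii).

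The main case is $T \notin \{I, J, S\}$. I would exploit the line $(SI)$: for $m \in \C \cap (SI)$ one has $s_m = I$, so the reflected direction is $\mathfrak{r}_m(I) = J$, giving $m, T, J$ collinear, hence $m \in (TJ)$ (using $T \ne J$). Thus $\C \cap (SI) \subseteq (TJ)$. If $(SI) = (TJ)$, the line contains both cyclic points, so $(SI) = \ell_\infty$ and $S, T \in \ell_\infty$; then the involution $\mathfrak{r}_m$ swaps the constant pairs $(S, T)$ and $(I, J)$, hence is independent of $m$, forcing its fixed points $t_m, n_m$ (the tangent and normal directions at $m$) to be constant along $\C$, so $\C$ is a line, contradicting~(ii). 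Otherwise $(SI) \cap (TJ)$ is a single point, and Bezout gives $\C \cdot (SI) = d$ concentrated there with intersection multiplicity $d$; running the symmetric argument on $(SJ)$ yields the analogous conclusion. For $d = 2$ this says $(SI)$ and $(SJ)$ are both tangent to the conic $\C$, so $S$ is a focus by the classical characterization of foci as intersections of isotropic tangents, contradicting~(iii). The hardest step is the $d \ge 3$ subcase: from the cross-ratio description of $\mathfrak{r}_m$ one derives an algebraic identity of the form $L_{SI} L_{TI} (\Delta_I F)^2 \equiv L_{SJ} L_{TJ} (\Delta_J F)^2 \pmod{F}$ (with $L_{PQ}$ an equation of the line $(PQ)$ and $\Delta_I F = F_x + i F_y$, $\Delta_J F = F_x - i F_y$), and the main obstacle is to deduce from this, together with the high-order tangencies along $(SI)$ and $(SJ)$, that $\C$ must be contained in a conic, which is incompatible with irreducibility of $\C$ in degree $\ge 3$; morally this is the algebraic version of the Apollonius--Chasles characterization of conics by the reflection property.
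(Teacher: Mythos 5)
Your proof of irreducibility and your treatment of the degenerate positions of the concurrency point $T$ are fine, but the argument has a genuine gap exactly where you flag it: the case $d\ge 3$ with $T\not\in\{I,J,S\}$. There you reduce the problem to an algebraic identity modulo $F$ and then say that ``the main obstacle is to deduce \dots that $\C$ must be contained in a conic,'' invoking only a ``moral'' appeal to the Apollonius--Chasles reflection characterization of conics. That deduction is the entire content of the proposition in this case --- everything before it (the tangency of $(SI)$ and $(SJ)$ to $\C$ at single points, the identity relating $\Delta_I F$, $\Delta_J F$, $\Delta_{\mathbf S}F$ and the lines of the triangle) is compatible a priori with curves of higher degree, and ruling them out requires an actual argument, not a named principle. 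A secondary (repairable) imprecision: your Bezout step ``$\C\cdot(SI)=d$ concentrated at one point'' only controls the points of $\C\cap(SI)$ at which $\mathcal R_m$ is defined; the points $I$, $S$, singular points of $\C$ and tangency points with the triangle $(IJS)$ are base points of $T_{\C,S}$ and escape your collinearity argument, so the intersection cycle need not be concentrated where you claim.

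For comparison, the paper closes this central case by an entirely different route: for $S\not\in\ell_\infty$ it invokes the Quetelet--Dandelin decomposition of $\Sigma_S(\C)$ as the evolute of (a homothet of) the orthotomic of $\C$ from $S$. If the caustic is a point, the orthotomic has a point evolute, hence is a line or a circle; since $\C$ is recovered from the orthotomic as a contrapedal, this forces $d=2$ with $S$ a focus, contradicting (iii). The case $S\in\ell_\infty$ is handled by a source--target symmetry plus the explicit cross-ratio identity $(-iF_y+F_x)^2=a(iF_y+F_x)^2$ on $\C$, which forces $d=1$. In other words, the classical evolute/orthotomic machinery is precisely what supplies the ``no curve of degree $\ge 3$ has all reflected rays concurrent'' statement that your direct projective argument leaves unproved. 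To salvage your approach you would either have to import that decomposition, or genuinely carry out the algebraic step: show that the identity $\Delta_IF\,\Delta_JF\,L_{TS}-\Delta_{\mathbf S}F\,\Delta_IF\,L_{TJ}-\Delta_{\mathbf S}F\,\Delta_JF\,L_{TI}\equiv 0 \pmod F$ (which is the correct form of the concurrency condition, obtained by pairing $\mathbf R_{F,\mathbf S}$ with $\mathbf T$) is impossible for irreducible $\C$ of degree $\ge 3$; as it stands this is asserted, not proved.
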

\begin{proof}
Assume (i), (ii) and (iii) and that $\Sigma_S(\C)=\{S'\}$
with $S'=[x_1:y_1:z_1]$.

When $S\not\in\ell_\infty$, we will use the fact that $\Sigma_S(\C)$ is the evolute 
of the orthotomic of $\C$ with respect to $S$.
Since $C$ is not a line, the orthotomic of $\C$ with respect to $S$ is not reduced to a point
but its evolute is a point.
This implies that the orthotomic of $\C$ with respect to $S$ is either a line (not equal
to $\ell_\infty$) or a circle.
But $\C$ is the contrapedal (or orthocaustic) curve (from $S$) 
of the image by the $S$-centered homothety (with ratio $1/2$)
of the orthotomic of $\C$.
Therefore $d=2$ and $S$ is a focal point of $\C$, which contradicts (iii).

When $S\in\ell_\infty$ but $S'\not\in\ell_\infty$, then, for symetry
reasons, we also have $\Sigma_{S'}(\C)=\{S\}$ and we conclude analogously.

Suppose now that $S,S'\in\ell_\infty$. We have $z_0=z_1=0$.
For every $m=[x:y:1]\in\C\setminus(\ell_\infty\cup V(Q(\nabla F)))$, we have
$\beta(S,t_m,I,J)=\beta(t_m,S',I,J)$
Therefore we have
$$\frac{(ix_0-y_0)(-iF_y+F_x)}
    {(iF_y+F_x)(-ix_0-y_0)}=\frac{(iF_y+F_x)(-ix_1-y_1)}
     {(-iF_y+F_x)(ix_1-y_1)} $$
and so
$${(ix_0-y_0)(ix_1-y_1)(-iF_y+F_x)^2}
    ={(iF_y+F_x)^2(-ix_0-y_0)(-ix_1-y_1)}.$$
Now, according to (i), $ix_0-y_0\ne 0$, $-ix_0-y_0\ne 0$, 
$ix_1-y_1\ne 0$, $-ix_1-y_1\ne 0$. Hence $(-iF_y+F_x)^2=a(iF_y+F_x)^2$
for some $a\ne 0$, which implies that $d=1$ and contradicts (ii).

Hence we proved that $\Sigma_S(\C)$ is not reduced to a point.
Now the irreducibility of $\Sigma_S(\C)$ comes from the fact that
$\Sigma_S(\C)=\overline{\Phi_{F,\mathbf{S}}(\C)}$ and that $\C$ is
an irreducible curve.
\end{proof}
\begin{prop}\label{prop1}
Assume that $\Sigma_{S}(\mathcal C)$ is not reduced to a point.
Then we have
\begin{equation}
\mbox{class}(\Sigma_S(\C))=\mbox{deg}(\overline{T_{\mathcal{C},S}(\C)}),
\end{equation}
where $\overline{T_{\mathcal{C},\mathbf{S}}(\C)}$ stands for the Zariski closure of $T_{\mathcal{C},\mathbf{S}}(\C)$.
\end{prop}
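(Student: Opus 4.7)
The plan is to reinterpret both sides of the claimed equality via classical plane curve duality. The key observation is that the map $T_{\C,S}$, when composed with the natural identification between $\mathbb{P}^2$ and the dual projective plane $(\mathbb{P}^2)^\vee$ sending $[u:v:w]$ to the line $V(uX+vY+wZ)$, is precisely the map that sends a generic $m \in \C$ to the reflected line $\mathcal{R}_m$. Indeed, this is exactly the content of the explicit coordinate formula derived for $u_{\mathbf{m}}, v_{\mathbf{m}}, w_{\mathbf{m}}$ just before the definition of $R_{\C,S}$.

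With this in hand, I would argue first that $\overline{T_{\C,S}(\C)} \subseteq \Sigma_S(\C)^\vee$. This follows from the description of $\Sigma_S(\C)$ as the Zariski closure of the envelope of the family $\{\mathcal{R}_m\}_m$, together with the existence of the caustic map $\Phi_{F,\mathbf{S}}$: for a generic $m \in \C$, the line $\mathcal{R}_m$ is tangent to $\Sigma_S(\C)$ at the point $\Phi_{F,\mathbf{S}}(m)$, so under the above identification $T_{\C,S}(m)$ lies in the dual curve $\Sigma_S(\C)^\vee$.

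For the reverse inclusion, I would use irreducibility. The curve $\C$ is irreducible, hence so is its image $\overline{T_{\C,S}(\C)}$; and $\Sigma_S(\C)$ is an irreducible curve by Proposition \ref{nontrivial}, so its dual $\Sigma_S(\C)^\vee$ is irreducible as well. Since $\Sigma_S(\C) = \overline{\Phi_{F,\mathbf{S}}(\C)}$, every tangent line to $\Sigma_S(\C)$ at a smooth point arises as $\mathcal{R}_m$ for some $m \in \C$, so $\overline{T_{\C,S}(\C)}$ cannot be a proper subvariety (i.e.\ a point) of $\Sigma_S(\C)^\vee$ when the latter is a curve. Two irreducible curves with one containing the other must coincide, giving $\overline{T_{\C,S}(\C)} = \Sigma_S(\C)^\vee$. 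The proof then concludes by invoking the biduality theorem for plane curves, which identifies $\mathrm{class}(\Sigma_S(\C))$ with $\deg(\Sigma_S(\C)^\vee)$.

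The main obstacle is the reverse inclusion, i.e.\ ruling out the degenerate possibility that $\overline{T_{\C,S}(\C)}$ is a single point while $\Sigma_S(\C)$ is still an honest curve. The only way a point of $(\mathbb{P}^2)^\vee$ could be the whole image is if all reflected lines $\mathcal{R}_m$ coincide, which would force $\Sigma_S(\C)$ to reduce to that single line; under our hypothesis that $\Sigma_S(\C)$ is not a point (and, implicitly, the generic position implied by the proposition), this degeneracy is excluded and the biduality argument applies cleanly.
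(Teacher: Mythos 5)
Your argument is essentially the paper's own proof: the identification of $T_{\mathcal{C},S}(m)$ with the tangent line $\mathcal R_m$ to $\Sigma_S(\mathcal C)$ at $\Phi_{F,\mathbf{S}}(m)$ is exactly the commutative diagram $T_{\mathcal{C},S}=\delta_{\Sigma_S(\mathcal C)}\circ(\Phi_{F,\mathbf{S}})_{|\mathcal C}$ that the paper writes down, and the conclusion $\overline{T_{\mathcal{C},S}(\mathcal C)}=(\Sigma_S(\mathcal C))^\vee$ together with $\mathrm{class}=\deg$ of the dual is the same. One small remark: irreducibility of $\Sigma_S(\mathcal C)$ should be quoted from $\Sigma_S(\mathcal C)=\overline{\Phi_{F,\mathbf{S}}(\mathcal C)}$ with $\mathcal C$ irreducible rather than from Proposition \ref{nontrivial}, whose hypotheses are not assumed here.
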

\begin{proof}
This comes from the fact that $\Sigma_S(\C)$ is the Zariski closure of the envelope of 
$\{\mathcal R_m,\ m\in \C\setminus(Sing(\C)\cup\{S\}\cup\ell_\infty\cup V(Q(\nabla F))\}$ and
can be precised as follows. 
For every algebraic curve $\Gamma =V(G)$ (with  $G$
in $Sym^k(\mathbf{V}^\vee)$ for some $k$), we consider
the Gauss map $\delta
_{\Gamma}:{\mathbb{P}}^{2}\longrightarrow {\mathbb{P}}^{2}$
defined on coordinates by $\delta _{\Gamma}([x:y:z])=[G_x:G_y:G_z]$, we
obtain immediately  
the commutative diagram :
\begin{equation}\label{diagramme}
\begin{array}{ccc}
 \mathcal{C} & \overset{\left(\Phi _{F,\mathbf{S}}\right)_{|\mathcal{C}} } {\longrightarrow }
& \Sigma _{ S}(\mathcal{C}) \\ 
  & \overset{T_{\mathcal{C},{S}}}{\searrow } &  \downarrow \delta _{\Sigma _{S}  (\mathcal C)} \\ 
  &  & \delta _{\Sigma _{S}
  (\mathcal C)}(\Sigma _{S} (\mathcal C))
   \cong(\Sigma _{S} (\mathcal C))^{\vee } \end{array},
\end{equation}
with $\Phi_{F,\mathbf{S}}$ the caustic map defined in \cite{fredsoaz1}
(see the begining of the present section).
\end{proof}
Let us notice that, according to the proof of Proposition \ref{prop1},
the rational map $T_{\mathcal{C},S}$ as the same degree as the rational map
$(\Phi_{F,\mathbf{S}})_{|\mathcal C}$ (since $\Sigma_S(\C)$ is irreducible and since the
Gauss map $(\delta_{\Sigma_{S}(\mathcal C)})_{|\Sigma_{S}(\mathcal C)}$ 
is birational \cite{Fisher}).
\section{Formulas for the class of the caustic}\label{sec3}
Since the map $T_{\mathcal{C},S}$ 
may be non birational, we introduce
the notion of class with multiplicity 
of $\Sigma_S(\C)$:
$${\textrm{mclass}} (\Sigma_{S}(\mathcal C))=\delta_1(S,\C)\times
{\textrm{class}} (\Sigma_{S}(\mathcal C))$$
where ${\textrm{class}} (\Sigma_{S}(\mathcal C))$ is the class of 
the algebraic curve $\Sigma_S(\C)$
and where $\delta_1(S,\C)$ is the degree of the rational map $T_{\mathcal{C},{S}}$.
We recall that $\delta_1(S,\C)$ corresponds to the number of preimages
on $\mathcal C$ of a generic point of $\Sigma_S(\C)$ by $T_{\mathcal{C},{S}}$.

Before stating our main result, let us introduce some notations.
For every $m_1\in\mathbb P^2$, we write $\mu_{m_1}=
  \mu_{m_1}(\mathcal C)$ for the multiplicity of $m_1$
on $\mathcal C$ and consider the set
$Branch_{m_1}(\mathcal C)$ of branches of $\mathcal C$ at $m_1$.
We denote by $\mathcal E$ the set of couples point-branch $(m_1,\mathcal B)$ of $\mathcal C$
with $m_1\in\mathcal C$ and $\mathcal B\in Branch_{m_1}(\mathcal C)$.
For every $(m_1,\mathcal B)\in\mathcal E$, we write $e_{\mathcal B}$ for the multiplicity
of $\mathcal B$ and $\mathcal T_{m_1}(\mathcal B)$ the tangent line to $\mathcal B$
at $m_1$; we observe that $\mu_{m_1}=\sum_{\mathcal B\in Branch_{m_1}(\mathcal C)}
e_{\mathcal B}$. 
We write $i_{m_1}(\Gamma,\Gamma')$ the intersection number of
two curves $\Gamma$ and $\Gamma'$ at $m_1$.
For any algebraic curve $\mathcal C'$ of $\mathcal P^2$, 
we also define the contact number $\Omega_{m_1}(\mathcal C,\mathcal C')$ 
of $\mathcal C$ and $\mathcal C'$ at $m_1\in{\mathbb P}^2$ by
$$\Omega_{m_1}(\mathcal C,\mathcal C'):=i_{m_1}(\mathcal C,\mathcal C')-\mu_{m_1}(\mathcal C)
\mu_{m_1}(\mathcal C')\ \ \ \ \mbox{if}\ m_1\in\C\cap\C'$$
and
$$\Omega_{m_1}(\mathcal C,\mathcal C'):=0\ \ \ \ 
\mbox{if}\ m_1\not\in\C\cap\C'.$$
Recall that
$\Omega_{m_1}(\mathcal C,\mathcal C')=0$ means that $m_1\not\in\mathcal C\cap\mathcal C'$ or that
$\mathcal C$ and $\C'$ intersect transversally at $m_1$.
\begin{thm}\label{THM1}
Assume that the hypotheses of Proposition \ref{nontrivial} hold true.
\begin{enumerate}
\item If $S\not\in\ell_\infty$, the class (with multiplicity) 
of $\Sigma_ S(\mathcal C)$ is given by
\begin{equation}\label{classfini}
\mbox{mclass}(\Sigma_{ S}(\C))
=2d^\vee+d-2  f'-  g-  f-  g'+  q',
\end{equation}
where
\begin{itemize}
\item $g$ is the contact number of $\mathcal C$ with $\ell_\infty$, i.e.
$  g:=\sum_{m_1\in\C\cap\ell_\infty}\Omega_{m_1}(\mathcal C,\ell_\infty), $
\item $  f$ is the multiplicity number at a cyclic point of $\mathcal C$ with an isotropic line from 
$S$, i.e.
$$  f:=i_{I}(\mathcal C,(I S))+
    i_{ J}(\mathcal C,(J S)),$$ 
\item $  f'$ is the contact number of $\mathcal C$ with an isotropic line from $S$
outside $\{I, J, S\}$, i.e.
$$  f':= \sum_{m_1\in(\C\cap(IS))\setminus \{I, S\} }
  \Omega_{m_1}(\mathcal C,( I S))+
  \sum_{m_1\in(\C\cap(JS))\setminus \{ J, S\} }
  \Omega_{m_1}(\mathcal C,( J S)),$$
\item $  g'$ given by
$  g':=i_{S}(\mathcal C,( IS))+i_{S}(\mathcal C,( JS))
-\mu_{ S};$
\item $ q'$ is given by
$$  q':= \sum_{(m_1,\mathcal B)\in\mathcal E:m_1\not\in\{ I, J, S\},
T_{m_1}\mathcal B=( I S)\ or\ T_{m_1}\mathcal B=( J S),\
    i_{m_1}(\mathcal B,\mathcal T_{m_1}(\mathcal B))\ge 2e_{\mathcal B}}
[i_{m_1}(\mathcal B,\mathcal T_{m_1}(\mathcal B))-2e_\mathcal B].$$
\end{itemize}
\item 
If $ S\in\ell_\infty$, the class of $\Sigma_S(\mathcal C)$
is 
\begin{equation}\label{classinfini}
\mbox{mclass}(\Sigma_{S}(\C))
=2d^\vee+d- 2  g-\mu_{I}-\mu_{ J}
-\mu_{S}-c'(S),
\end{equation}
with
$$c'(S):=\sum_{\mathcal B\in Branch_{S}(\mathcal C):
i_{S}(\mathcal B,\ell_\infty)= 2 e_{\mathcal B}} (e_{\mathcal B}+\min(i_{S}(\mathcal B,Osc_{S}(\mathcal B))-3e_{\mathcal B},0)), $$
where $Osc_{S}(\mathcal B)$ is any smooth algebraic 
osculating curve to $\mathcal B$ at $S$ (i.e. 
any smooth algebraic curve $\mathcal C'$ such that $i_{S}(\mathcal B,\C')> 2e_{\B}$).
\end{enumerate}
\end{thm}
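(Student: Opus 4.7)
The plan is to reduce the class computation to a Bezout-style intersection count. By Proposition \ref{prop1} together with the commutative diagram \refeq{diagramme} and the fact that the Gauss map of $\Sigma_S(\C)$ is birational, we have $\textrm{mclass}(\Sigma_S(\C)) = \delta_1(S,\C)\cdot\deg(\overline{T_{\C,S}(\C)})$, which counts the preimages in $\C\setminus\base(T_{\C,S})$ (with multiplicity) of the intersection $L\cap\overline{T_{\C,S}(\C)}$ for a generic line $L=V(aX+bY+cZ)\subset\mathbb P^2$. Introducing the \emph{reflected polar}
\[
\mathcal P_L := V\bigl(a\,u_{\mathbf m}+b\,v_{\mathbf m}+c\,w_{\mathbf m}\bigr),
\]
a curve of degree $2d-1$, I would then write
\[
\textrm{mclass}(\Sigma_S(\C)) \;=\; d(2d-1) \;-\; \sum_{m\in\C\cap\base(T_{\C,S})} i_m(\C,\mathcal P_L).
\]

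The next step is to compute each local excess $i_m(\C,\mathcal P_L)$ at the base points listed in the previous section, namely $\C\cap\{I,J,S\}$, the singular points of $\C$, and the tangency points of $\C$ with one of the three sides of the triangle $(IJS)$. The decomposition
\[
\mathbf R_{F,\mathbf S}(\mathbf m) = Q(\nabla F(\mathbf m))\,(\mathbf m\wedge\mathbf S) - 2\Delta_{\mathbf S}F(\mathbf m)\,(\mathbf m\wedge\mathbf n_{\mathbf m})
\]
shows that $\mathcal P_L$ is built from the polar $V(\Delta_{\mathbf S}F)$ of degree $d-1$ and from the double polar $Q(\nabla F)=\Delta_I F\cdot\Delta_J F$ of degree $2(d-1)$, so its local order of contact with a branch $\B\in\textrm{Branch}_{m_1}(\C)$ is controlled by the orders of contact of $\B$ with $\ell_\infty$, with $(IS)$ and with $(JS)$. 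I would read these orders off a probranch parametrization of $\B$, using the formula proved in Appendix \ref{append2} and the fundamental lemma of \cite{fredsoaz1}.

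For the first case $S\notin\ell_\infty$, summing the generic polar contribution over $\textrm{Sing}(\C)$ yields $d(d-1)-d^\vee$ via the Plücker formula; doubled, and combined with the linear term coming from $\mathbf m\wedge\mathbf S$, it rearranges $d(2d-1)$ into the leading $2d^\vee+d$. The defects $g$, $f$, $f'$, $g'$ then record the excess contact of $\C$ with $\ell_\infty$, with $(IS)\cup(JS)$ at the cyclic points, at other points on these isotropic lines, and at $S$ respectively, while $q'$ handles the branches whose tangent coincides with one of the isotropic lines through $S$. For the second case $S\in\ell_\infty$, the same accounting applies but the triangle $(IJS)$ degenerates: $(IS)$ and $(JS)$ both pass through a point of $\ell_\infty$, so $\ell_\infty$ plays a double role giving the $2g$; the three vertices contribute $\mu_I+\mu_J+\mu_S$, and the higher-order osculation of branches at $S$ along $\ell_\infty$ is captured by $c'(S)$ via the smooth osculating curve $\textrm{Osc}_S(\B)$.

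The main obstacle is the probranch bookkeeping at the degenerate base points. The identity $Q(\nabla F)=\Delta_I F\cdot\Delta_J F$ forces automatic vanishing of $\mathbf R_{F,\mathbf S}$ at $I$ and $J$, so their local contributions are not given by the generic polar formula and require a subtler two-term expansion — this is where the corrections $f$, $q'$ (and $\mu_I+\mu_J$ in the infinite case) come from. In the infinite case the coincidence of $S$ with $\ell_\infty$ similarly demands a second-order analysis of $\mathbf R_{F,\mathbf S}$ along branches at $S$, and the well-definedness of $c'(S)$ (independence of the choice of smooth osculating curve) must itself be checked from the probranch expansion, which is the most delicate point of the proof.
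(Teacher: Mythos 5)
Your reduction is exactly the paper's: Proposition \ref{prop1} together with the Fundamental Lemma of \cite{fredsoaz1} give
$$\mbox{mclass}(\Sigma_S(\C))=d(2d-1)-\sum_{m_1\in \base(T_{\C,S})}i_{m_1}\bigl(\C,{\mathcal P}^{(r)}_{S,a}(\C)\bigr),$$
and your conversion of $d(2d-1)$ into $2d^\vee+d$ by extracting a squared factor whose valuations sum to the Pl\"ucker defect $d(d-1)-d^\vee$ is precisely the paper's step of writing $\mathbf{R}_{F\circ M,\mathbf{S'}}^{(\mathbf A,\mathbf B)}(x,g_{i,\B}(x),1)=(D_{i,\B})^2\hat R_{i,\B}(x)$. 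So the architecture is the right one and matches the paper's.

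The gap is that everything after this reduction is announced rather than carried out, and that is where the theorem actually lives. The specific values of the local contributions $h_{m_1,\B_0}$ (the paper's Lemma \ref{LEM}, eleven cases indexed by the position of $m_1$ and of $\mathcal T_{m_1}\B_0$ relative to $I$, $J$, $S$) are obtained by normalizing $\mathcal T_{m_1}\B_0$ to $V(y)$ and the relevant images $M^{-1}(\mathbf I)$, $M^{-1}(\mathbf J)$, $M^{-1}(\mathbf S)$ to coordinate vectors, expanding the wedge product $\hat R_{i,\B}(x)$ along each probranch $y=g_i(x)$, and locating the coordinate of minimal valuation. None of the sharp coefficients in the statement --- the factor $2$ on $f'$ against $1$ on $g$, the threshold $i_{m_1}(\B,\mathcal T_{m_1}\B)\ge 2e_{\B}$ in $q'$, the correction $\min(i_{m_1}(\B,\mathcal T_{m_1}\B)-2e_{\B},0)$, the restriction of $c'(S)$ to branches with $i_{S}(\B,\ell_\infty)=2e_{\B}$ exactly and its expression via an osculating curve --- is forced by the qualitative accounting you describe; each one comes from a cancellation in these expansions (for instance, at $m_1=S\in\ell_\infty$ with $val(g_i)=2$ the leading terms of the second coordinate of $\hat R_i$ cancel and the valuation jumps to $\min(\beta_1+1,4)$, which is exactly what produces $c'(S)$). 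You correctly flag the degenerate base points $I$, $J$ and the case $S\in\ell_\infty$ as the delicate spots, but flagging them is not resolving them: as written, the proposal recovers the shape $2d^\vee+d-(\mbox{corrections})$ but not the formulas \refeq{classfini} and \refeq{classinfini} themselves.
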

The notations introduced in this theorem
are directly inspired by those of Salmon and Cayley \cite{Salmon-Cayley} (see Section \ref{append1}).
Let us point out that, in this article, $g$ is not the geometric genus of the curve.
\begin{rqe}
Observe that we also have
$$c'(S):=\sum_{\mathcal B\in Branch_{S}(\mathcal C):
i_{S}(\mathcal B,\ell_\infty)= 2 e_{\mathcal B}} (e_{\mathcal B}+\min(
\beta_1(S,\mathcal B)-3e_{\mathcal B},0)),$$
where $\beta_1(S,\mathcal B)$ is the first characteristic exponent
of $\mathcal B$ non multiple of $e_\mathcal B$ (see \cite{Zariski}).

Observe that, when $i_{S}(\mathcal B,\mathcal T_{S}(\mathcal B))=2e_{\mathcal B}$, 
we have $\min(i_{S}(\mathcal B, Osc_{S}(\mathcal B))-3e_{\mathcal B},0)=0$ except
if $S$ is a singular point and if the probranches of $\mathcal B$ are given
by $Y-x_0^{-1}y_0=\alpha Z^2+\alpha_1 Z^{\beta_1}+...$ in the chart $X=1$ if $x_0\ne 0$
(or $X-y_0^{-1}x_0=\alpha Z^2+\alpha_1 Z^{\beta_1}+...$ in the chart $Y=1$ otherwise),
with $\alpha\ne 0$, $\alpha_1\ne 0$ and $2<\beta_1<3$. 
Hence $c'(S)=\sum_{\mathcal B\in Branch_{S}(\mathcal C):
i_{S}(\mathcal B,\ell_\infty)= 2 e_{\mathcal B}} e_{\mathcal B}$
when $\mathcal C$ admits no such branch tangent at $S$ to $\ell_\infty$.
\end{rqe}
Combining Proposition \ref{birationalite} and Theorem \ref{THM1}, we obtain
\begin{coro}[A source-generic formula for the class]
Let $\mathcal C\subset\mathbb P^2$  be
a fixed curve of degree $d\ge 2$.
For a generic
source point $S$, we have $\delta_1(S,\mathcal C)=1$
and $class(\Sigma_{S}(\mathcal C))=2d^\vee+d-g-\mu_I-\mu_J$
with $g$ the contact number of $\mathcal C$ with $\ell_\infty$.
\end{coro}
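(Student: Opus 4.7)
The plan is to combine Proposition \ref{birationalite} with part (1) of Theorem \ref{THM1}, after checking that for generic $S$ the hypotheses of Proposition \ref{nontrivial} are satisfied and that the correction terms $f'$, $g'$, $q'$ in \eqref{classfini} vanish, while $f$ collapses to $\mu_I+\mu_J$.

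First I would fix $\mathcal{C}$ and determine the ``bad set'' of source positions $S\in\mathbb{P}^2$ that need to be excluded. This is the union of the following finite collection of proper closed subsets: the line $\ell_\infty$, the curve $\mathcal{C}$ itself (to ensure $S\notin\mathcal{C}$, which automatically kills $g'$ and the $\{S\}$--contributions in $f'$), the two points $I,J$, the finitely many foci of $\mathcal{C}$ (only needed when $d=2$), and for each cyclic point $K\in\{I,J\}$ the union $V_K$ of all lines through $K$ that are either tangent to some branch of $\mathcal{C}$ at $K$ or tangent to $\mathcal{C}$ at some smooth point outside $K$. Each $V_K$ is a finite union of lines (the latter lines correspond to intersections of $\mathcal{C}$ with its polar with respect to $K$, and the tangents at $K$ are at most $\mu_K$ in number). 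Adding the bad locus from Proposition \ref{birationalite}, we still get a proper Zariski closed subset; generic $S$ avoids all of it, yielding $\delta_1(S,\mathcal{C})=1$ and $\mbox{mclass}(\Sigma_S(\mathcal{C}))=\mbox{class}(\Sigma_S(\mathcal{C}))$.

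Next I would verify the hypotheses of Proposition \ref{nontrivial} (generic $S$ avoids $\{I,J\}$ and, if $d=2$, avoids the foci; $d\geq 2$ by assumption), so Theorem \ref{THM1}(1) applies since $S\notin\ell_\infty$. Now I would evaluate each term of \eqref{classfini} for generic $S$:
\begin{itemize}
\item For $f$: since $(IS)$ avoids all tangent directions to branches of $\mathcal{C}$ at $I$, the line $(IS)$ meets $\mathcal{C}$ transversally to every branch at $I$, so $i_I(\mathcal{C},(IS))=\mu_I$; similarly $i_J(\mathcal{C},(JS))=\mu_J$, giving $f=\mu_I+\mu_J$.
\item For $f'$: the line $(IS)$ is not tangent to $\mathcal{C}$ at any smooth point outside $\{I,S\}$ (by the choice of $V_I$), so every $m_1\in(\mathcal{C}\cap(IS))\setminus\{I,S\}$ is a transverse smooth intersection, hence $\Omega_{m_1}(\mathcal{C},(IS))=0$; the same holds for $(JS)$, so $f'=0$.
\item For $g'$: since $S\notin\mathcal{C}$, one has $\mu_S=0$ and $i_S(\mathcal{C},(IS))=i_S(\mathcal{C},(JS))=0$, so $g'=0$.
\item For $q'$: the condition $T_{m_1}\mathcal{B}=(IS)$ at $m_1\notin\{I,J,S\}$ requires $(IS)$ to be tangent to a branch of $\mathcal{C}$ at a point distinct from $I$, which is excluded; analogously for $(JS)$; so $q'=0$.
\end{itemize}
Substituting into \eqref{classfini} yields $\mbox{class}(\Sigma_S(\mathcal{C}))=2d^\vee+d-g-\mu_I-\mu_J$.

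The only delicate point is the book-keeping of the generic conditions in the first step: one must check that ``generic $S$ on the line $(IS)$'' genuinely picks out a generic line through $I$, and that the polar of $I$ with respect to $\mathcal{C}$ produces only finitely many tangents from $I$ to $\mathcal{C}$. Both facts are standard, but they are the place where one must be careful to avoid a circular use of ``generic''. Once this is in place, the rest of the computation is a direct simplification of Theorem \ref{THM1}(1).
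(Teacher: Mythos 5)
Your proposal is correct and follows essentially the same route as the paper's own proof: combine Proposition \ref{birationalite} (so $\delta_1(S,\mathcal C)=1$ and mclass equals class) with Theorem \ref{THM1}(1), imposing the generic conditions $S\not\in\ell_\infty$, $S\not\in\mathcal C$, and $(IS)$, $(JS)$ not tangent to $\mathcal C$, which force $f'=g'=q'=0$ and $f=\mu_I+\mu_J$. Your additional book-keeping of the bad locus (the polars from $I$ and $J$ giving finitely many tangent lines, etc.) is a more explicit version of what the paper leaves implicit, but it is not a different argument.
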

\begin{proof}
Due to Proposition \ref{birationalite}, $\delta_1( S,\mathcal C)=1$
for a generic $S\in\mathbb P^2$. So $class(\Sigma_{S}
(\mathcal C))=mclass(\Sigma_{S}(\mathcal C))$.

Assume moreover, that $S\not\in\ell_\infty$
(so we apply the first formula of Theorem \ref{THM1}), $S\not\in \C$ (so $g'=0$), that $( IS)$ and
$( JS)$ are not tangent to
$\mathcal C$ (so $f'=q'=0$ and $f=\mu_{ I}(\mathcal C)+\mu_{ J}(\mathcal C)$). We obtain the result.
\end{proof}
\section{Examples}\label{exemples}
Let us now illustrate our result for two particular mirror curves. 
\subsection{Example of the lemniscate of Bernoulli}
We consider the case when $\mathcal C=V(F)$ is the lemniscate of Bernoulli given
by $F(x,y,z)=(x^2+y^2)^2-2(x^2-y^2)z^2$ and when $S\in\mathbb P^2\setminus\{I,J\}$. 
The degree of $\mathcal C$ is $d=4$.
The singular points of $\mathcal C$ are : $ I[1:i:0]$, $ J[1:-i:0]$ and $\mathcal O[0:0:1]$.
These three points are double points, each one having two different tangent lines. 
Hence the class of $\mathcal C$ is given by $d^\vee=d(d-1)-3\times 2=6$ and so
$$2d^\vee+d=16. $$
The tangent lines to $\mathcal C$ at $ I$ are $\ell_{1, I}:=V(y-iz-ix)$ and 
$\ell_{2, I}:=V(y-iz+ix)$
(the intersection number of $\mathcal C$ with $\ell_{1, I}$ or with 
$\ell_{2, I}$ at $ I$ is equal to 4).
The tangent lines to $\mathcal C$ at $ J$ are $\ell_{1, J}:=V(y+iz-ix)$ and 
$\ell_{2, J}:=V(y+iz+ix)$ (the intersection number of $\mathcal C$ with $\ell_{1, J}$ or with 
$\ell_{2, J}$ at $ J$ is equal to 4).
This ensures that we have
$$
  f=2(2+{\mathbf 1}_{S\in\ell_{1, I}}+{\mathbf 1}_{S\in\ell_{2, I}}
      +{\mathbf 1}_{S\in\ell_{1, J}}+{\mathbf 1}_{S\in\ell_{2, J}} ).
$$
Observe that $\ell_\infty$ is not tangent to $\mathcal C$. Indeed $I$ and $J$ are the only points
in $\mathcal C\cap\ell_\infty$ and $\ell_\infty$ is not tangent to $\mathcal C$ at these points. Therefore we have
$  g=0$ and $c'(S)=0$.

Since $ I$ and $ J$ are also the only points  at which $\mathcal C$ is tangent to an isotropic line
(i.e. a line containing $ I$ or $J$), we have $  f'=0$, $  g'=\mu_{S}$, $  q'=0$.
In this case, one can check that $\delta_1(S,\mathcal C)=1$.
Finally, we get
\begin{equation}
\mbox{if}\ S\not\in\ell_\infty,\ \ \ 
\mbox{class}(\Sigma_{S}(\mathcal C))=12-2({\mathbf 1}_{S\in
  \ell_{1,I}\cup\ell_{2, I} }+{\mathbf 1}_{S\in
  \ell_{1, J}\cup\ell_{2, J} })-\mu_{S}.
\end{equation}
Moreover,
since $\mu_{ I}=\mu_{J}= 2$, we have
\begin{equation}
\mbox{if}\  S\in\ell_\infty\setminus\{ I, J\},\ \ \ 
\mbox{class}(\Sigma_{S}(\mathcal C))=16-2-2=12,
\end{equation}
(since $\mu_{ I}=\mu_{ J}= 2$ and since $\mu_{ S}=0$).
For example, for $S[1:0:1]$, we get $\mbox{class}(\Sigma_{S}(\mathcal C))=8$, since
$S$ is in $\ell_{2, I}\cap\ell_{1,\mathcal J}$ but
not in $\mathcal C$ (so $\mu_{S}=0$).
\subsection{Example of a quintic curve}
As in \cite{fredsoaz1}, we consider the quintic curve $\mathcal C=V(F)$ with $F(x,y,z)=y^2z^3-x^5$.
We also consider a light point $S[x_0:y_0:z_0]
   \in\mathbb P^2\setminus\{ I,J\}$.
This curve admits two singular points: $A_1[0:0:1]$ and $A_2[0:1:0]$, we have $d=5$.

We recall that $\mathcal C$  admits a single branch at $A_1$, which has multiplicity 2 and
which is tangent to $V(y)$. We observe that $i_{A_1}(\mathcal C,V(y))=5$.

Analogously, $\mathcal C$  admits a single branch at $A_2$, which has multiplicity 3 and
which is tangent to $\ell_\infty$. We observe that $i_{A_2}(\mathcal C,\ell_\infty)=5$.

We obtain that the class of $\mathcal C$ is $d^\vee=5$ and that $\mathcal C$
has no inflexion point (these two facts are proved in \cite{fredsoaz1}). In particular, we get that $2d^\vee+d=15$.

Since $A_2$ is the only point of $\mathcal C\cap\ell_\infty$, we get that 
$  g=\Omega_{A_2}(\mathcal C,\ell_\infty)=2$ and $  f=0$.

The curve $\mathcal C$ admits six (pairwise distinct) isotropic tangent lines other than $\ell_\infty$: 
$\ell_{1}$, $\ell_{2}$ and $\ell_{3}$ containing $ I$ 
$$\forall k\in\{1,2,3\},\ \ 
   \ell_{k}=V\left(ix-y+\frac{3i}{25}\alpha^k\sqrt[3]{20}z\right) ,\ \ \mbox{with}\ \alpha:=e^{\frac {2i\pi}3}$$
and $\ell_{4}$, $\ell_{5}$ and $\ell_{6}$ containing $\mathcal J$:
$$\forall k\in\{1,2,3\},\ \ \ell_{3+k}=V\left(ix+y+\frac{3i}{25}\alpha^k\sqrt[3]{20}z\right).$$
For every $i\in\{1,2,3,4,5,6\}$,
we write $a_i$ the point at which $\mathcal C$ is tangent to $\ell_i$ (the points $a_i$ correspond to the points
of $\mathcal C\cap V(F_x^2+F_y^2)\setminus\{A_1,A_2\}$).
Since $\mathcal C$ contains no inflexion point and since
$A_1$ and $A_2$ are the only singular points of $\mathcal C$, we get that, 
$$  f'=\#\{i\in\{1,2,3,4,5,6\}\ :\ S\in\ell_i\setminus\{a_i\}\}\ \ \mbox{and}\ \   q'=0$$
when $S\not\in\ell_\infty$.

Now recall that  $  g'=i_{S}(\mathcal C,( IS))+i_{ S}(\mathcal C,(\mathcal JS))
-\mu_{ S}$. 
Again, in this case, one can check that $\delta_1( S,\mathcal C)=1$.
If $ S\not\in\ell_\infty$, we have
\begin{equation}
\textrm{class}(\Sigma_{S}(\mathcal C))= 
   13 -2\times\#\{i\in\{1,2,3,4,5,6\}\ :\  S\in\ell_i\setminus\{a_i\}\}-  g'
\end{equation}
and if $ S\in\ell_\infty\setminus\{ I,\mathcal J\}$, we have
\begin{equation}
\textrm{class}(\Sigma_{ S}(\mathcal C))=11-3\times {\mathbf 1}_{ S=A_2}.
\end{equation}
We observe that the points of $\mathbb P^2\setminus\{ I,\mathcal J\}$ belonging to two dictinct 
$\ell_k$ are outside $\mathcal C$. The set of these points is
$$\mathcal E:=\bigcup_{k=1}^3\left\{\left[-\frac 3{25}\sqrt[3]{20}\alpha^k:0:1\right],\ 
  \left[\frac{3}{50}\sqrt[3]{20}\alpha^k:\frac 3{50}\sqrt{3}\sqrt[3]{20}\alpha^k:1\right],\ 
 \left[\frac{3}{50}\sqrt[3]{20}\alpha^k:-\frac 3{50}\sqrt{3}\sqrt[3]{20}\alpha^k:1\right]\right\}$$
with $\alpha=e^{\frac {2i\pi}3}$.
Finally, the class of the caustic in the different cases is summarized in the following table.
$$
\begin{array}{|c|c|}
\hline
\mbox{Condition on } S\in\mathbb P^2\setminus\{ I,\mathcal J\}
    &\textrm{class}(\Sigma_{ S}(\mathcal C))=\\
\hline
 S=A_2&8\\
\hline
 S\in\mathcal E&9\\
\hline
 S\in\mathcal C\cap \bigcup_{k=1}^6(\ell_k\setminus\{a_k\})&10\\
\hline
 S\in(\ell_\infty\setminus\{A_2\})\cup\left(\bigcup_{k=1}^6\ell_k\setminus(\mathcal E\cup \mathcal C)\right)
   \cup\{A_1\}\cup\{a_1,...,a_6\}&11\\
\hline
S\in \mathcal C\setminus\left(\ell_\infty\cup\{A_1\}\cup\bigcup_{k=1}^6\ell_k\right)&12\\
\hline
\mbox{otherwise}&13\\
\hline
\end{array}
$$

\section{On the formulas by Brocard and Lemoyne and by Salmon and Cayley}\label{append1}
\subsection{Formulas given by Brocard and Lemoyne}
Recall that, when $S\not\in\ell_\infty$, $\Sigma_S(\C)$ is the evolute of an homothetic
of the pedal of $\C$ from $S$.

The work of Salmon and Cayley is under ordinary Pl\"ucker conditions 
(no hyper-flex, no singularities other than ordinary cups and ordinary nodes).
In \cite[p.137]{Salmon-Cayley}, Salmon and Cayley gave the following formula for the class of
the evolute~:
$$n'=m+n-f-g. $$
Replace now $m$, $n$, $f$ and $g$ by $M$, $N$, $F$ and $G$ (respectively) given in
\cite[p. 154]{Salmon-Cayley} for the pedal. Doing so, one exactly get (with the same notations) 
the formula of the class of caustics by reflection given by 
Brocard and Lemoyne in \cite[p. 114]{Brocard-Lemoyne}.

As explained in introduction, this composition of formulas of Salmon and Cayley
is incorrect because of 
the non-conservation of the Pl\"ucker conditions by the pedal transformation.
Nevertheless, for completeness sake, let us present the Brocard and Lemoyne formula
and compare it with our formula.
Brocard and Lemoyne gave the following formula
for the class of the caustic by reflection $\Sigma_{S}(\mathcal C)$ when 
$S\not\in\ell_\infty$:
\begin{equation}\label{BL}
class(\Sigma_{ S}(\mathcal C)) = d+2(d^\vee-\hat f')-\hat g-\hat f-\hat g'+\hat q',
\end{equation}
for an algebraic curve $\mathcal C$ of degree $d$, of class $d^\vee$,
$\hat g$ times tangent to $\ell_\infty$,
passing $\hat f$ times through a cyclic point, $\hat f'$ times tangent to an isotropic line
of $ S$, passing $\hat g'$ times through $ S$, 
$\hat q'$ being the coincidence number of contact
points when an isotropic line is multiply tangent. In \cite{Salmon-Cayley}, $\hat q'$
is defined as the coincidence number of tangents at points $\iota_1$, $\iota_2$ 
of ${\mathbb P^2}^\vee$
(corresponding to $( I S)$ and
$( J S)$) if these points are multiple points 
of the image of  $\mathcal C$ 
by the polar reciprocal transformation with center $ S$; i.e. 
$\hat q'$ represents the number of ordinary flexes of $\C$.

When $ S\not\in\ell_\infty$, let us compare terms appearing in our formula
(\ref{classfini}) with terms of (\ref{BL}) :
\begin{itemize}
\item $\hat g$ seems to be equal to $   g$;
\item it seems that $\hat f=\mu_I+\mu_J$ and so 
$$   f
     =\hat f+\Omega_I(\C,(IS))+\Omega_J(\C,(JS));$$
\item it seems that $\hat f'=\sum_{m_1\in\C\cap(IS)}\Omega_{m_1}(\C,(IS))+
    \sum_{m_1\in\C\cap(JS)}\Omega_{m_1}(\C,(JS))$
and so
$$   f':=\hat  f'-\Omega_I(\C,(IS))-\Omega_\J(\C,(JS))-\Omega_S(\C,(IS))
       -\Omega_S(\C,(JS));$$
\item it seems that $\hat g'=\mu_S$, therefore
$$   g':=\hat g' +\Omega_{S}(\C,(IS))+\Omega_{S}(\C,(JS));$$
\item our definition of $   q'$ appears as an extension of $\hat q'$
(except that we exclude the points $m_1\in\{I,J,S\}$). 
\end{itemize}
Observe that these terms coincide with the definition of Brocard and Lemoyne
if  $( IS)$
and $( J S)$ are not tangent to 
$\mathcal C$ at $ S$, $ I$, $ J$. In particular, if we call $BL$ the
right hand side of \refeq{BL}, the first item Theorem \ref{THM1} states that, when
$S$ is not at infinity we have
$$mclass(\Sigma_S(\C))=BL +\Omega_I(\C,(IS))+\Omega_\J(\C,(JS))+\Omega_S(\C,(IS))+\Omega_S(\C,(JS)).$$
\subsection{A counterexample to the formula of Brocard and Lemoyne}
We consider an example in which $\Omega_I(\C,(IS))=\Omega_J(\C,(JS))=1$, which means that $(IS)$ is tangent to $\C$ at $I$
and $(JS)$ is tangent to $\C$ at $J$.
Let us consider the non-singular 
quartic curve $\mathcal C=V(2yz^3+2z^2y^2+2zy^3+2y^4-2z^3x+2zyx^2+5y^2x^2+3x^4)$
and $ S[0:0:1]$.
This curve $\mathcal C$ has degree $d=4$ and class $d^\vee=4\times 3=12$, 
is not tangent to $\ell_\infty$,
is tangent to $( S I)$ at $ I$ and nowhere else,
is tangent to $( S J)$ at $ J$ and nowhere else; these tangent
points are ordinary. $S$ is a non singular point of $\C$.
Therefore, with our definitions, we have
$g=0$, $f=2+2=4$, $f'=0$, $g'=1+1-1=1$, $q'=0$, 
which gives
$\mbox{class}(\Sigma_{ S}(\mathcal C))=4+2(12-0)-0-4-1-0=23$,
since in this case $\delta_1( S,\mathcal C)=1$.
In comparison, the Brocard and Lemoyne formula would give
$\hat g=0$, $\hat f=1+1=2$, $\hat f'=1+1=2$, $\hat g'=1$, $\hat q'=0$ and so their formula gives
$\mbox{class}(\Sigma_{ S}(\mathcal C))=4+2(12-2)-0-2-1-0=21$
but this is false!

\section{Proof of Theorem \ref{THM1}}\label{proof}
To compute the degree of $\overline{T_{\mathcal{C},{S}}(\mathcal C)}$, we will use the
Fundamental Lemma given in \cite{fredsoaz1}.
Let us first recall the definition of $\varphi$-polar introduced in \cite{fredsoaz1}
and extending the notion of polar.
\begin{defi}
Let $p\ge 1$, $q\ge 1$ and let $\mathbf{W}$ be a complex
vector space of dimension $p+1$. Given $\varphi :{\mathbb{P}}^{p}:=\mathbb P(\mathbf{W})\rightarrow {\mathbb{
P}}^q$ a rational map defined by $\varphi=[\varphi_0:\cdots:\varphi_q]$
(with $\varphi_1,\dots,\varphi_q\in Sym^d(\mathbf{W}^\vee)$)
and $a=[a_0:\cdots:a_q]\in {\mathbb{P}}
^q$, we define the $\varphi$-polar at $a$, denoted by ${\mathcal{P}}
_{\varphi,a}$, the hypersurface of degree $d$ given by 
$
{\mathcal{P}}_{\varphi,a}:=V\left( \sum_{j=0}^q a_j\varphi_j \right)\subseteq \mathbb P^p.
$
\end{defi}
With this definition, the ``classical'' polar of a curve ${\mathcal{C}}=V(F)$ of 
${\mathbb{P}}^{2}$ (for some homogeneous polynomial $F\in\mathbb C[x,y,z]$) at $a$ is the $
\delta_{\mathcal C}$-polar curve at $a$, where $\delta_{\mathcal C}:[x:y:z]\mapsto[F_x:F_y:F_z]$.
\begin{defi}
We call {\bf reflected polar (or $r$-polar) 
of the plane curve $\mathcal C$ with respect to
$S$ at $a$} the $R_{\mathcal{C},{S}}$-polar at $a$, i.e. the curve ${\mathcal P}^{(r)}
   _{S,a}(\mathcal C):={\mathcal{P}}_{R_{\mathcal{C}, S},a}$.
\end{defi}
From a geometric point of view, ${\mathcal P}^{(r)}
   _{S,a}(\mathcal C)$ is an 
algebraic curve such that, for every $m\in\mathcal C \cap {\mathcal P}^{(r)}
   _{S,a}(\mathcal C)$, $\mathcal{R}_m$ contains $a$ (if $\mathcal{R}_m$ is well defined),
this means that line $(am)$ is tangent to $\Sigma_{ S}(\mathcal C)$
at the point $m'=\Phi_{F,\mathbf{S}}(m)\in \Sigma_{ S}(\mathcal C)$ associated to $m$ (see
picture).

\begin{center}
\includegraphics[scale=0.43]{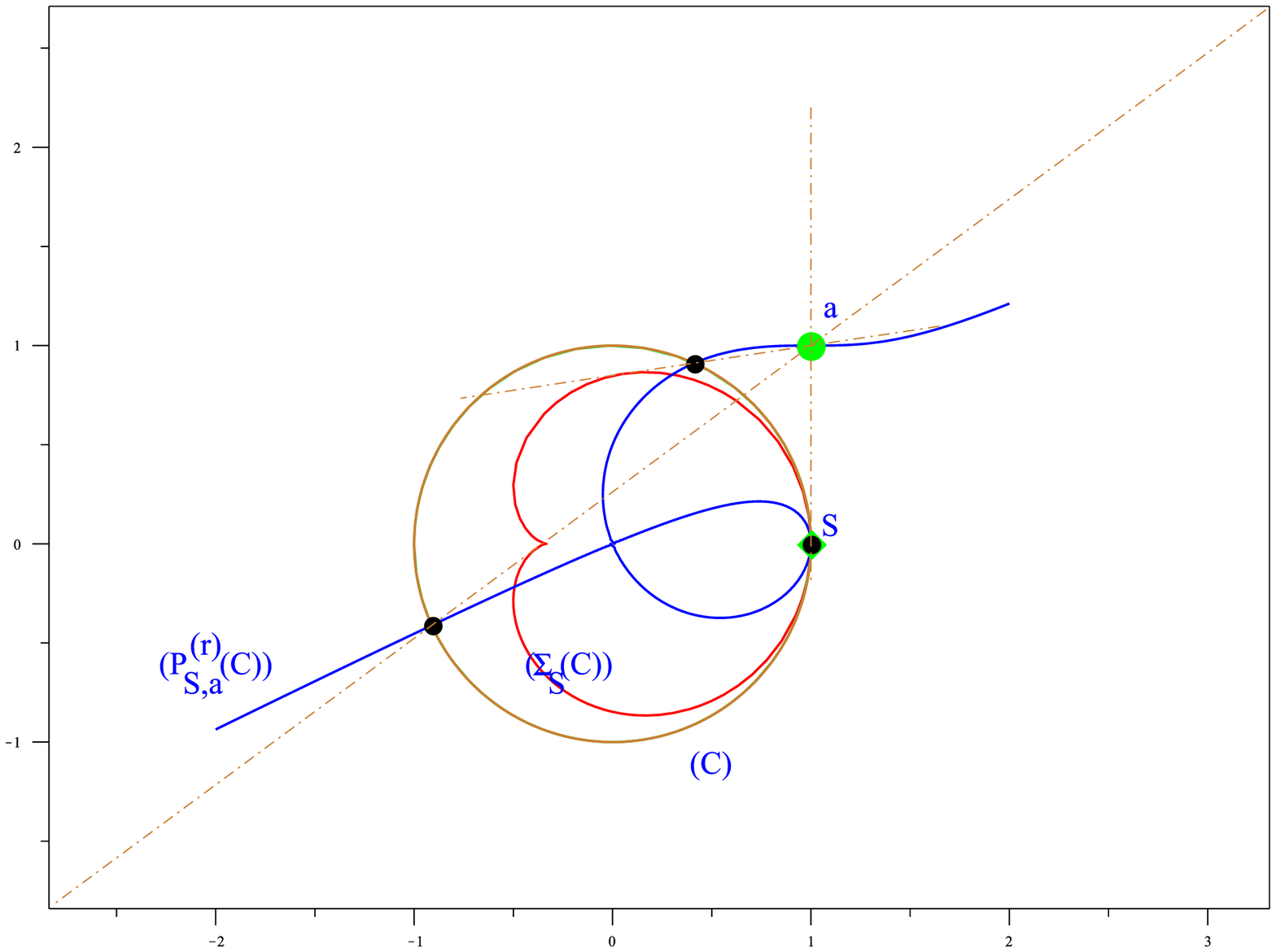}
\end{center}
%

Let us now recall the statement of the fundamental lemma proved in \cite{fredsoaz1}.

\begin{lem}[Fundamental lemma \cite{fredsoaz1}]\label{lemmefondamental}
Let $\mathbf{W}$ be a complex vector space of dimension
$p+1$, let
$\mathcal C$ be an irreducible algebraic curve of ${\mathbb P}^p
:=\mathbb P(\mathbf{W})$ and 
$\varphi :{\mathbb P}^p\rightarrow{\mathbb P}^q$ be a rational map 
given by $\varphi=[\varphi_0:\cdots:\varphi_q]$ with 
$\varphi_0,...,\varphi_q\in Sym^{\delta}(\mathbf{W}^\vee)$.
Assume that $\mathcal C\not\subseteq Base(\varphi)$ and that
$\varphi_{|\mathcal C}$ has degree $\delta_1\in \mathbb N\cup\{\infty\}$.
Then, for generic $a=[a_0:\cdots:a_q]\in{\mathbb P}^q$, the following 
formula holds true
$$\delta_1.\mbox{deg}\left(\overline{\varphi({\mathcal C})}\right)=\delta .\mbox{deg}
({\mathcal C})
     -\sum_{p\in Base(\varphi_{\vert \mathcal C})} i_p \left({\mathcal C},
{\mathcal P}_{\varphi,a}\right),$$
with convention $0.\infty=0$ and 
$\textrm{deg}(\overline{\varphi({\mathcal C})})=0$ if $\#\overline{\varphi({\mathcal C})}<\infty$.
\end{lem}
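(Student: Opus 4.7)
The plan is to realize the $\varphi$-polar as the pullback of a hyperplane in $\mathbb P^q$ and then compare two ways of counting $\C\cap\mathcal{P}_{\varphi,a}$: via B\'ezout in $\mathbb P^p$ and via the fibre structure of $\varphi_{|\C}$. Let $H_a:=V(\sum_{j=0}^q a_jY_j)\subset\mathbb P^q$ be the hyperplane dual to $a$. Its defining form pulls back under $\varphi$ to $\sum_{j=0}^q a_j\varphi_j$, so $\mathcal{P}_{\varphi,a}$ is, as a subscheme of $\mathbb P^p$, the preimage $\varphi^{-1}(H_a)$, a hypersurface of degree $\delta$. Since $\C\not\subseteq\base(\varphi)$, the set of $a$ such that $\sum a_j\varphi_j|_\C\equiv 0$ is a proper linear subspace of $\mathbb P^q$; outside that subspace, $\C\not\subseteq\mathcal{P}_{\varphi,a}$, and B\'ezout yields
$$\sum_{p\in\C\cap\mathcal{P}_{\varphi,a}}i_p(\C,\mathcal{P}_{\varphi,a})=\delta\cdot\deg(\C).$$

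Next I would split this sum according to the partition of $\C\cap\mathcal{P}_{\varphi,a}$ into base points of $\varphi_{|\C}$ and non-base points. Outside the base locus, $\varphi$ is regular and $p\in\mathcal{P}_{\varphi,a}\Leftrightarrow\varphi(p)\in H_a$, so the non-base part is carried by the fibres of $\varphi_{|\C}$ above $H_a\cap\overline{\varphi(\C)}$. I would then impose on $a\in\mathbb P^q$ the following open genericity conditions: \textbf{(i)} $H_a$ does not contain $\overline{\varphi(\C)}$; \textbf{(ii)} if $\overline{\varphi(\C)}$ is a curve, $H_a$ meets it transversally in $\deg(\overline{\varphi(\C)})$ distinct smooth points, each avoiding the image of $Sing(\C)\cup\base(\varphi_{|\C})$ and the branch locus of $\varphi_{|\C}$. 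Under \textbf{(i)}--\textbf{(ii)}, each such point $q\in H_a\cap\overline{\varphi(\C)}$ has exactly $\delta_1$ preimages in $\C$, all smooth points at which $\varphi_{|\C}$ is \'etale. A local computation at such a preimage $p$ of $q$ then gives $i_p(\C,\mathcal{P}_{\varphi,a})=1$: \'etaleness sends a uniformizer of $\overline{\varphi(\C)}$ at $q$ to a uniformizer of $\C$ at $p$, transversality makes a local equation of $H_a$ at $q$ a uniformizer of $\overline{\varphi(\C)}$, so $\sum a_j\varphi_j$ has a simple zero on $\C$ at $p$. Hence the non-base contribution is $\delta_1\cdot\deg(\overline{\varphi(\C)})$, and subtracting it from the B\'ezout identity gives the announced formula.

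The degenerate case where $\overline{\varphi(\C)}$ is a single point (so $\delta_1=\infty$) is handled separately: for generic $a$ the hyperplane $H_a$ misses this point, hence $\C\cap\mathcal{P}_{\varphi,a}\subseteq\base(\varphi_{|\C})$, and the stated formula reduces to the B\'ezout identity upon applying the convention $0\cdot\infty=0$. I expect the main technical obstacle to be the simultaneous verification of \textbf{(i)}--\textbf{(ii)}: this rests on a Bertini-type transversality statement for the linear system of hyperplanes of $\mathbb P^q$, together with the verification that the branch locus of $\varphi_{|\C}$ and the $\varphi$-images of $Sing(\C)$ and $\base(\varphi_{|\C})$ form a finite (hence proper) subset of $\overline{\varphi(\C)}$, so that a generic $H_a$ can indeed be made to avoid them while meeting $\overline{\varphi(\C)}$ transversally.
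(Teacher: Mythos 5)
Your argument is correct and follows essentially the same route as the proof of this lemma in the cited reference \cite{fredsoaz1} (the present paper only quotes the statement): B\'ezout applied to $\mathcal C$ and the degree-$\delta$ hypersurface $V\bigl(\sum_j a_j\varphi_j\bigr)$, followed by the observation that for a generic hyperplane $H_a$ the non-base part of $\mathcal C\cap\mathcal P_{\varphi,a}$ consists of the $\delta_1$ preimages of each of the $\deg(\overline{\varphi(\mathcal C)})$ transverse points of $H_a\cap\overline{\varphi(\mathcal C)}$, each counted with multiplicity one. Your separate treatment of the case where $\overline{\varphi(\mathcal C)}$ is a point (equivalently $\delta_1=\infty$) correctly accounts for the convention $0\cdot\infty=0$.
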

Due to this lemma and to Proposition \ref{prop1}, we have
\begin{equation}\label{formule1}
\mbox{mclass}(\Sigma_ S(\mathcal C))
=
d(2d-1)-\sum_{m_1\in Base(T_{\mathcal{C},{S}})}i_{m_1}(\mathcal C, {\mathcal P}^{(r)}
   _{ S,a}(\mathcal C)) .
\end{equation}
Now, we enter in the most technical stuff which is the computation of the intersection
numbers $i_{m_1}(\mathcal C, {\mathcal P}^{(r)}_{ S,a}(\mathcal C))$ of $\mathcal C$
with its reflected polar at the base points of $R_{\mathcal{C},S}$.
To compute these intersection numbers, it will be useful to observe the form of
the image of $R_{\mathcal{C},{S}}$ by linear changes of variable.
It is worth noting that $\mathbf{R}_{F,\mathbf{S}}$ can be rewritten
$$\mathbf{R}_{F,\mathbf{S}}=id \wedge\left[\Delta_\mathbf{I}F\Delta_\mathbf{J}F\cdot\mathbf{S}-
\Delta_\mathbf{S}F\Delta_\mathbf{I}F\cdot\mathbf{J}-\Delta_\mathbf{S}F\Delta_\mathbf{J}F\cdot\mathbf{I}\right]. $$

\begin{prop}
Let $M\in GL(\mathbf{V})$. We have
$$\mathbf{R}_{F,\mathbf{S}}\circ M=Com(M)\cdot \mathbf{R}_{F\circ M,M^{-1}(\mathbf{S})}^{(M^{-1}(\mathbf{I}),M^{-1}(\mathbf{J}))}, $$
with $Com(M):=det(M)\cdot{}^tM^{-1}$ and
$$\mathbf{R}_{G,\mathbf{S'}}^{(\mathbf{A},\mathbf{B})}:=id 
\wedge\left[\Delta_\mathbf{A}G\Delta_\mathbf{B}G\cdot\mathbf{S'}-\Delta_{\mathbf{S'}}G\Delta_\mathbf{A}G\cdot\mathbf{B}-\Delta_{\mathbf{S'}}G\Delta_\mathbf{B}G\cdot\mathbf{A}\right]. $$
\end{prop}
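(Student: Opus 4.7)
The plan is to verify the identity pointwise at an arbitrary $\mathbf{m}\in\mathbf{V}$, reducing it to two elementary ingredients: a chain rule for the linear forms $\Delta_{\mathbf{P}}F$, and the classical cofactor identity for the wedge product in dimension three.

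First I would establish the chain rule. Since $D(F\circ M)(\mathbf{m})=DF(M\mathbf{m})\cdot M$, for every $\mathbf{P}\in\mathbf{V}$ one has
$$\Delta_{\mathbf{P}}F(M\mathbf{m})=\Delta_{M^{-1}\mathbf{P}}(F\circ M)(\mathbf{m}).$$
Setting $G:=F\circ M$, $\mathbf{A}:=M^{-1}\mathbf{I}$, $\mathbf{B}:=M^{-1}\mathbf{J}$ and $\mathbf{S}':=M^{-1}\mathbf{S}$, this gives the three scalar identities $\Delta_{\mathbf{I}}F(M\mathbf{m})=\Delta_{\mathbf{A}}G(\mathbf{m})$, $\Delta_{\mathbf{J}}F(M\mathbf{m})=\Delta_{\mathbf{B}}G(\mathbf{m})$ and $\Delta_{\mathbf{S}}F(M\mathbf{m})=\Delta_{\mathbf{S}'}G(\mathbf{m})$.

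Second I would plug these three identities into the definition of $\mathbf{R}_{F,\mathbf{S}}(M\mathbf{m})=M\mathbf{m}\wedge\mathbf{W}(M\mathbf{m})$, where $\mathbf{W}(M\mathbf{m})$ denotes the bracketed vector, and use the relations $\mathbf{S}=M\mathbf{S}'$, $\mathbf{I}=M\mathbf{A}$, $\mathbf{J}=M\mathbf{B}$ to pull $M$ out of each summand by linearity (this is legitimate because the $\Delta_{\bullet}G(\mathbf{m})$ coefficients are scalars). The bracket then rewrites as $M\cdot\mathbf{W}'(\mathbf{m})$, where $\mathbf{W}'(\mathbf{m})$ is precisely the bracket whose wedge with $\mathbf{m}$ defines $\mathbf{R}_{G,\mathbf{S}'}^{(\mathbf{A},\mathbf{B})}(\mathbf{m})$.

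Finally I would invoke the standard three-dimensional identity $(M\mathbf{u})\wedge(M\mathbf{v})=Com(M)\cdot(\mathbf{u}\wedge\mathbf{v})$, which is a direct consequence of the coordinate formula for $\wedge$ together with the definition $Com(M)=\det(M)\,{}^{t}M^{-1}$. Applied to $\mathbf{u}=\mathbf{m}$ and $\mathbf{v}=\mathbf{W}'(\mathbf{m})$, this yields
$$\mathbf{R}_{F,\mathbf{S}}(M\mathbf{m})=M\mathbf{m}\wedge M\mathbf{W}'(\mathbf{m})=Com(M)\cdot\bigl(\mathbf{m}\wedge\mathbf{W}'(\mathbf{m})\bigr)=Com(M)\cdot\mathbf{R}_{F\circ M,M^{-1}\mathbf{S}}^{(M^{-1}\mathbf{I},M^{-1}\mathbf{J})}(\mathbf{m}),$$
which is the claim. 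The argument is essentially bookkeeping; the only mildly delicate point is keeping track of the three substitutions and checking that every term of the bracket really factors through $M$, but this is entirely mechanical, so I expect no real obstacle.
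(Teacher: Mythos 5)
Your proposal is correct and follows exactly the same route as the paper, whose proof is a one-line citation of the two identities you isolate: the chain rule $\Delta_{M(\mathbf{u})}(F)(M(\mathbf{P}))=\Delta_{\mathbf{u}}(F\circ M)(\mathbf{P})$ and the cofactor identity $M(\mathbf{u})\wedge M(\mathbf{v})=Com(M)(\mathbf{u}\wedge\mathbf{v})$. You merely spell out the intermediate bookkeeping (substituting $\mathbf{S}=M\mathbf{S}'$, $\mathbf{I}=M\mathbf{A}$, $\mathbf{J}=M\mathbf{B}$ and factoring $M$ out of the bracket) that the paper leaves implicit.
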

\begin{proof}
We use $M(\mathbf{u})\wedge M(\mathbf{v})=(Com(M))(\mathbf{u}\wedge \mathbf{v})$ and
$\Delta_{M(\mathbf{u})}(F)(M(\mathbf{P}))=\Delta_\mathbf{u}(F\circ M)(\mathbf{P})$.
\end{proof}
We write $\Pi:\mathbf{V}\setminus\{\mathbf{0}\}\rightarrow\mathbb P^2$
for the canonical projection, $P_0[0:0:1]\in\mathbb P^2$ and $\mathbf{P}_0(0,0,1)\in\mathbf{V}$.
Let $m_1$ be a base point of $\mathcal C$ 
and $M\in GL(\mathbf{V})$
be such that $\Pi(M(\mathbf{P}_0))=m_1$ and such that
the tangent cone of $V(F\circ M)$ at $P_0$ does not contain $V(x)$.
Let $\mu_{m_1}$ be the multiplicity of $m_1$ in $\mathcal C$ ($m_1$ is a singular
point of $\mathcal C$ if and only if $\mu_{m_1}>1$). 
Then, for every $a\in\mathbb P^2$, writing $a':=  M^{-1}(a)$, we have
\begin{eqnarray*}
i_{m_1}(\mathcal C, {\mathcal P}^{(r)}_{S,a}(\mathcal C))
   &=& i_{m_1}(\mathcal C,V(\langle \mathbf{a}, \mathbf{R}_{F,\mathbf{S}}(\cdot)\rangle))\\
   &=& i_{P_0}(V(F\circ M),V(\langle \mathbf{a},
     \mathbf{R}_{F,\mathbf{S}}\circ M(\cdot)\rangle))\\
   &=& i_{P_0}(V(F\circ M),V(\langle \mathbf{a'},
     \mathbf{R}_{F\circ M,M^{-1}(\mathbf{S})}^{(M^{-1}(\mathbf{I}),M^{-1}(\mathbf{J}))}(\cdot)\rangle))\\
   &=& \sum_{\mathcal B\in Branch_{P_0}(V(F\circ M))}
   i_{P_0}(\mathcal B,V(\langle \mathbf{a'},
   \mathbf{R}_{F\circ M,M^{-1}(\mathbf{S})}^{(M^{-1}(\mathbf{I}),M^{-1}(\mathbf{J}))}(\cdot)\rangle)),
\end{eqnarray*}
where $Branch_{P_0}(V(F\circ M))$ is the set of branches of $V(F\circ M)$ at 
$P_0$. 
The last equality comes from Proposition \ref{formulemultiplicite} proved in
appendix (see formula (\ref{branche})).
Let $b$ be the number of such branches. Of course, $b=1$ for non-singular
points. Writing $e_{\mathcal B}$ for the multiplicity of the branch $\mathcal B$,
we have $\mu_{m_1}=\sum_{\mathcal B\in Branch_{P_0}(V(F\circ M))}e_{\mathcal B}$.
Let us write ${\mathbb C}\langle x^{\frac 1N}\rangle$ and ${\mathbb C}\langle x^{\frac 1N},y
  \rangle$ for the rings of convergent power series of $x^{\frac 1N},y$.
Let $\mathbb C\langle x^*\rangle:=\bigcup_{N\ge 1}{\mathbb C}\langle x^{\frac 1N}\rangle$
and $\mathbb C\langle x^*,y\rangle:=\bigcup_{N\ge 1}{\mathbb C}\langle x^{\frac 1N},y\rangle$.
For every $h=\sum_{q\in\mathbb Q_+}a_q x^q\in{\mathbb C}\langle x^*\rangle$, 
we define the valuation of $h$ as follows:
$$val(h):=val_x(h(x)):=\min\{q\in\mathbb Q_+,\ a_q\ne 0\}.$$
Let $\mathcal B$ be a branch of $V(F\circ M)$ at $P_0$.
We precise that $\mathcal B_0=M(\mathcal B)\subset\mathbb P^2$ is a branch of
$\mathcal C$ at $m_1$.
Let $\mathbf{A}(x_A,y_A,z_A):=M^{-1}(\mathbf{I})$, $\mathbf{B}(x_B,y_B,z_B):=M^{-1}(\mathbf{J})$ and $\mathbf{S'}:=M^{-1}(\mathbf{S})$.
Let $\mathcal T_{\mathcal B}$ be the tangent line to $\mathcal B$ at $P_0$. The branch $\mathcal B$ can be splitted in $e_\mathcal B$ 
pro-branches with equations $y=g_{i,\mathcal B}(x)$ in the chart $z=1$ 
(for $i\in\{1,...,e_{\mathcal B}\}$) 
with $g_i\in \mathbb C\langle x^*\rangle$ having (rational) valuation larger than or equal to 1 
(so $g_i'(0)=0$). For $j\in\{1,...,e_{\mathcal B'}\}$,
consider also the equations $y=g_{j,\mathcal B'}(x)$ (in the chart $z=1$)
of the pro-branches $\mathcal V_{j,\mathcal B'}$ 
for each branch $\mathcal B'\in Branch_{P_0}(V(F\circ M))$.
This notion of pro-branches comes from the combination of the
Weierstrass and of the Puiseux theorems. It has been used namely by Halphen in \cite{Halphen} and
by Wall in \cite{Wall}. One can also see \cite{fredsoaz1}. 
There exists a unit $U$ of $\mathbb C\langle x,y\rangle$ such that the following equality holds 
true in $\mathbb C\langle x^*,y\rangle$
$$F(M(x,y,1))=U(x,y)\prod_{\mathcal B'\in Branch_{P_0}(V(F\circ M))}
   \prod_{j=1}^{e_{\mathcal B'}} (y-g_{j,\mathcal B'}(x)).$$
For a generic $a$ (with $a':=M^{-1}(a)$), using 
(\ref{branche2})), we obtain
\begin{eqnarray*}
i_{P_0}(\mathcal B,V(\langle \mathbf{a'},
   \mathbf{R}_{F\circ M,\mathbf{S'}}^{(\mathbf{A},\mathbf{B})}(\cdot)\rangle))
  &=&\sum_i val_x\left( \langle \mathbf{a'},
     \mathbf{R}_{F\circ M,\mathbf{S'}}^{(\mathbf{A},\mathbf{B})}(x,g_{i,\mathcal B}(x),1)\rangle\right)\\
  &=&\sum_i  \min_{j=1,2,3}val_x\left(\left[\mathbf{R}_{F\circ M,\mathbf{S'}}^{(\mathbf{A},\mathbf{B})}
     (x,g_{i,\mathcal B}(x),1)\right]_j\right).
\end{eqnarray*}
Hence Formula (\ref{formule1}) becomes
\begin{equation}\label{EQ**}
\mbox{mclass}(\Sigma_{S}(\mathcal C))
  := d(2d-1)-\sum_{m_1\in\mathcal C}\sum_{\mathcal B}
  \sum_{i=1}^{e_{\mathcal B}}
  \min_{j=1,2,3}val_x\left(\left[\bf{R}_{F\circ M,\mathbf{S'}}^{(\mathbf{A},\mathbf{B})}
     (x,g_{i,\mathcal B}(x),1)\right]_j\right),
\end{equation}
where, for every $m_1\in\mathcal C$, $M$ depends on $m_1$ and is as above,
where the sum is over $\mathcal B\in Branch_{P_0}(V(F\circ M))$.
Due to Lemma 33 of \cite{fredsoaz1}, for every $\mathbf{P}
(x_P,y_P,z_P)\in\mathbf{V}\setminus\{\mathbf{0}\}$, we have
$$(\Delta_{M(\mathbf{P})}F)\circ M(x,g_{i,\mathcal B}(x),1)=\Delta_{\mathbf{P}}(F\circ M)(x,g_{i,\mathcal B}(x),1)
   =  D_{i,\mathcal B}(x)W_{\mathbf{P},i,\mathcal B}(x),$$
with
$$W_{\mathbf{P},i,\mathcal B}(x):=y_P -g_{i,\mathcal B}'(x)x_P+z_P(xg'_{i,\mathcal B}(x)-g_
          {i,\mathcal B}(x))$$
and with $D_{i,\mathcal B}(x):=U(x,g_{i,\mathcal B}(x))
   \prod_{\mathcal B'\in Branch_{P_0}(V(F\circ M))}
     \prod_{j=1,...,e_{\mathcal B'}:(\mathcal B',j)\ne(\mathcal B,i)}
        (g_{i,\mathcal B}(x)-g_{j,\mathcal B'}(x))$.
Hence we have
\begin{equation}\label{eqR}
\mathbf{R}_{F\circ M,\mathbf{S'}}^{(\mathbf{A},\mathbf{B})}(x,g_{i,\mathcal B}(x),1):=
(D_{i,\mathcal B})^2\cdot\hat R_{i,\mathcal B}(x)
\end{equation}
with
$$\hat R_{i,\mathcal B}(x):=\left(\begin{array}{c}x\\g_{i,\mathcal B}(x)\\
   1\end{array}\right)\wedge
   \left[W_{\mathbf{A},i,\mathcal B}(x)W_{\mathbf{B},i,\mathcal B}(x)\cdot\mathbf{S'}-W_{
\mathbf{S'},i,\mathcal B}(x)W_{\mathbf{A},i,
     \mathcal B}(x)\cdot\mathbf{B}-W_{\mathbf{S'},i,\mathcal B}(x)W_{\mathbf{B},i,\mathcal B}(x)\cdot\mathbf{A}\right]. $$
First, with the notations of \cite{fredsoaz1} (since $U(0,0)\ne 0$), we have
$$\sum_{\mathcal B\in Branch_{P_0}(V(F\circ M))}
     \sum_{i=1}^{e_{\mathcal B}}val (D_{i,\mathcal B})=V_{m_1},$$
(which is null if $m_1$ is a nonsingular point of $\mathcal C$).
Second, writing
$h_{m_1,i,\mathcal B} := \min(val([\hat R_{i,\mathcal B}]_j),\ j=1,2,3)$,
we observe that, due to Proposition 29 and to Remark 34 of \cite{fredsoaz1}, the quantity $\sum_{i=1}^{e_\mathcal B} h_{m_1,i,\mathcal B}$ only depends on $m_1$ and on the branch $\mathcal B_0=M(\mathcal B)$ of $\C$ at $m_1$ (it does not depend on the choice of $M\in GL(\mathbf{V})$ such that $\Pi(M(P_0))=m_1$ and such that $V(x)$ is not tangent to
$M^{-1}(\mathcal B_0)$). Hence we write
$$h_{m_1,\mathcal B_0}:=\sum_{i=1}^{e_{\mathcal{B}}}h_{m_1,i,\mathcal{B}}.$$
With these notations, due to (\ref{eqR}), formula (\ref{EQ**}) becomes
$${mclass(\Sigma_S(\mathcal C))=2d(d-1)+d-2\sum_{m_1\in Sing(\mathcal C)}V_{m_1}
    -\sum_{m_1\in\mathcal C}\sum_{\mathcal{B}_0\in Branch_{m_1}(\C)}h_{m_1,\mathcal B_0}}.$$
Moreover, as noticed in \cite{fredsoaz1}, we have 
$d(d-1)-\sum_{m_1\in Sing(\mathcal C)}V_{m_1}=d^\vee,$
where $d^\vee$ is the class of $\mathcal C$.
Therefore, we get
\begin{equation}\label{EQfin}
mclass(\Sigma_S(\mathcal C))=2d^\vee+d-\sum_{m_1\in\mathcal C}\sum_{\mathcal{B}_0\in Branch_{m_1}(\C)}h_{m_1,\mathcal B_0}.
\end{equation}
Theorem \ref{THM1} will come directly from the computation of $h_{m_1,i,\mathcal B}$
given in following result.
\begin{lem}\label{LEM}
Let $m_1\in\mathcal C$ and $\mathcal B_0\in Branch_{m_1}(\C)$.
Writing $\mathcal T_{m_1}\mathcal{B}_0$ for the tangent line
to $\mathcal B_0$ at $m_1$, $i_{m_1}(\mathcal{B}_0,\mathcal T_{m_1}\mathcal{B}_0)$ for the intersection number of $\mathcal B_0$
with $\mathcal T_{m_1}\mathcal{B}_0$ at $m_1$ and $e_{\mathcal B_0}$
for the multiplicity of $\mathcal B_0$, we have
\begin{enumerate}
\item $h_{m_1,\mathcal B_0}=0$ 
if $I, J,S\not\in\mathcal T_{m_1}\mathcal{B}_0$.
\item  
$h_{m_1,\mathcal B_0}=0$ 
if $\#(\mathcal T_{m_1}\mathcal{B}_0\cap\{ I, J, S\})=1$ and
$m_1\not\in\{  I, J, S\}$.
\item  $h_{m_1,\mathcal B_0}=e_{\mathcal{B}_0}$ 
if $\#(\mathcal T_{m_1}\mathcal{B}_0\cap\{ I, J, S\})=1$ and
$m_1\in\{  I, J, S\}$.
\item
$h_{m_1,\mathcal B_0}=i_{m_1}(\mathcal{B}_0,\mathcal T_{m_1}\mathcal{B}_0)
    +\min(i_{m_1}(\mathcal{B}_0,\mathcal T_{m_1}\mathcal{B}_0)-2e_{\mathcal{B}_0},0)$ 
if $\mathcal T_{m_1}\mathcal{B}_0=( I S)$, $ J\not\in
   \mathcal T_{m_1}\mathcal{B}_0$ and $m_1\not\in\{ I, S\}$.

$h_{m_1,\mathcal B_0}=i_{m_1}(\mathcal{B}_0,\mathcal T_{m_1}\mathcal{B}_0)
    +\min(i_{m_1}(\mathcal{B}_0,\mathcal T_{m_1}\mathcal{B}_0)-2e_{\mathcal{B}_0},0)$ 
if $\mathcal T_{m_1}\mathcal{B}_0=( J S)$, $ I\not\in
   \mathcal T_{m_1}\mathcal{B}_0$ and $m_1\not\in\{ J, S\}$.
\item
$h_{m_1,\mathcal B_0}=i_{m_1}(\mathcal{B}_0,\mathcal T_{m_1}\mathcal{B}_0)$ 
if $\mathcal T_{m_1}\mathcal{B}_0=( I S)$, $ J\not\in
   \mathcal T_{m_1}\mathcal{B}_0$ and $m_1\in\{ I, S\}$.

$h_{m_1,\mathcal B_0}=i_{m_1}(\mathcal{B}_0,\mathcal T_{m_1}\mathcal{B}_0)$ 
if $\mathcal T_{m_1}\mathcal{B}_0=( J S)$, $ I\not\in
   \mathcal T_{m_1}\mathcal{B}_0$ and $m_1\in\{ J, S\}$.
\item
$h_{m_1,\mathcal B_0}=i_{m_1}(\mathcal{B}_0,\mathcal T_{m_1}\mathcal{B}_0)
  -e_{\mathcal{B}_0}$ 
 if $\mathcal T_{m_1}\mathcal{B}_0=( I J)$, that $ S\not\in
   \mathcal T_{m_1}\mathcal{B}_0$ and
$m_1\not\in\{ I, J\}$.
\item $h_{m_1,\mathcal B_0}=
  i_{m_1}(\mathcal{B}_0,\mathcal T_{m_1}\mathcal{B}_0)$ 
if $\mathcal T_{m_1}\mathcal{B}_0=(I J)$, that $ S\not\in
   \mathcal T_{m_1}\mathcal{B}_0$ and
$m_1\in\{ I, J\}$.
\item $h_{m_1,\mathcal B_0}=
  2i_{m_1}(\mathcal{B}_0,\mathcal T_{m_1}\mathcal{B}_0)-2e_{\mathcal{B}_0}$
if $ I, J, S\in\mathcal T_{m_1}\mathcal{B}_0$ and $m_1\not\in\{
    I, J, S\}$.
\item $h_{m_1,\mathcal B_0}=
  2i_{m_1}(\mathcal{B}_0,\mathcal T_{m_1}\mathcal{B}_0)-e_{\mathcal{B}_0}$
if $ I, J, S\in\mathcal T_{m_1}\mathcal{B}_0$ and 
$m_1\in\{ I, J\}$.
\item $h_{m_1,\mathcal B_0}=
  2i_{m_1}(\mathcal{B}_0,\mathcal T_{m_1}\mathcal{B}_0)-e_{\mathcal{B}_0}$
if $ I,J, S\in\mathcal T_{m_1}\mathcal{B}_0$, 
$m_1= S$ and $i_{m_1}(\mathcal{B}_0,\mathcal T_{m_1}\mathcal{B}_0)\ne 2e_{\mathcal{B}_0}$.
\item $h_{m_1,\mathcal B_0}=
  e_{\mathcal{B}_0}(1+\min(\beta_1,3))$
if $ I, J, S\in\mathcal T_{m_1}\mathcal{B}_0$, $m_1= S$
and $i_{m_1}(\mathcal{B}_0,\mathcal T_{m_1}\mathcal{B}_0)= 2e_{\mathcal{B}_0}$, 
$e_{\mathcal{B}_0}\beta_1=i_{m_1}({\mathcal{B}_0},Osc_{m_1}(\mathcal{B}_0))$, where
$Osc_{m_1}(\mathcal{B}_0)$ is any osculating smooth algebraic curve to $\mathcal{B}_0$ at $m_1$ 
(the last formula of $h_{m_1,\mathcal B_0}$ holds true if we replace $e_{\mathcal B_0}\beta_1$ by the first characteristic exponent of $\mathcal B_0$ non multiple of $e_{\mathcal B_0}$, see \cite{Zariski}).
\end{enumerate}
\end{lem}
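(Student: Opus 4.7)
My plan is to compute $h_{m_1,\mathcal B_0}=\sum_{i=1}^{e_{\mathcal B}} h_{m_1,i,\mathcal B}$ by working case-by-case with the wedge-product formula
$$\hat R_{i,\mathcal B}(x)=\bigl(x,g_{i,\mathcal B}(x),1\bigr)\wedge\bigl[W_{\mathbf A}W_{\mathbf B}\cdot\mathbf{S'}-W_{\mathbf{S'}}W_{\mathbf A}\cdot\mathbf{B}-W_{\mathbf{S'}}W_{\mathbf B}\cdot\mathbf{A}\bigr],$$
together with $h_{m_1,i,\mathcal B}=\min_{j=1,2,3}val_x([\hat R_{i,\mathcal B}]_j)$. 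The crucial first observation is that $W_{\mathbf P,i,\mathcal B}(0)=y_P-g'_{i,\mathcal B}(0)x_P$, which vanishes exactly when $P\in\mathcal T_{m_1}\mathcal B_0$ (the tangent line to $\mathcal B_0$ at $m_1$, viewed after the change of coordinates $M$). Moreover, when $P$ coincides with the base point $m_1$, so that $\mathbf{P}\propto \mathbf{P}_0=(0,0,1)$, a direct computation gives $W_{\mathbf P,i,\mathcal B}(x)=xg'_{i,\mathcal B}(x)-g_{i,\mathcal B}(x)$, a series whose valuation equals $i_{m_1}(\mathcal B_0,\mathcal T_{m_1}\mathcal B_0)/e_{\mathcal B}$ up to the usual averaging over pro-branches. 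Thus the vanishing order and leading coefficient of each $W_{\mathbf P}$ are governed by (i) whether $P\in\{I,J,S\}$ lies on $\mathcal T_{m_1}\mathcal B_0$, and (ii) whether $P$ equals $m_1$.

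\textbf{Stratification of cases.} I stratify the analysis by the number $k\in\{0,1,2,3\}$ of points of $\{I,J,S\}$ lying on $\mathcal T_{m_1}\mathcal B_0$, and within each stratum by whether $m_1\in\{I,J,S\}$. When $k=0$ (case (1)), all three $W$'s are nonzero at $0$, so $w(0)$ is a nondegenerate combination of $\mathbf A,\mathbf B,\mathbf{S'}$ not proportional to $\mathbf{P}_0$, giving $\mathbf{P}_0\wedge w(0)\neq 0$ and $h=0$. When $k=1$ (cases (2)--(3)), exactly one $W$ vanishes at $0$, say $W_{\mathbf A}$, so $w(0)=-W_{\mathbf{S'}}(0)W_{\mathbf B}(0)\,\mathbf A$; the wedge $\mathbf P_0\wedge\mathbf A$ vanishes exactly when $m_1=I$, in which case one more Puiseux term yields $h_{m_1,i,\mathcal B}=1$ per pro-branch, summing to $e_{\mathcal B}$. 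When $k=2$ (cases (4)--(7)), the tangent equals one of $(IS)$, $(JS)$, or $(IJ)=\ell_\infty$ and two of the $W$'s vanish at $0$; their valuations equal either $i_{m_1}(\mathcal B_0,\mathcal T_{m_1}\mathcal B_0)/e_{\mathcal B}$ (when the associated point is $m_1$) or $1$ (otherwise), and the corrections $\min(i_{m_1}(\mathcal B_0,\mathcal T_{m_1}\mathcal B_0)-2e_{\mathcal B},0)$ and $-e_{\mathcal B}$ in the statement record exactly which of these two valuations wins on each of the three coordinates of the wedge. When $k=3$ (cases (8)--(11)), $S\in\ell_\infty$ and $\mathcal T_{m_1}\mathcal B_0=\ell_\infty$; all three $W$'s vanish at $0$, so each coordinate of $\hat R$ picks up a factor $W_{\mathbf P_1}W_{\mathbf P_2}$ for two of the three points, producing the leading valuation $\sim 2\,i_{m_1}(\mathcal B_0,\mathcal T_{m_1}\mathcal B_0)/e_{\mathcal B}$ per pro-branch, from which the $-2e_{\mathcal B}$ and $-e_{\mathcal B}$ corrections are read off by checking which of $\mathbf A,\mathbf B,\mathbf{S'}$ is proportional to $\mathbf P_0$.

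\textbf{Main obstacle: case (11).} The hardest case is (11), where $m_1=S$, $\mathcal T_{m_1}\mathcal B_0=\ell_\infty$, and the contact is borderline: $i_{m_1}(\mathcal B_0,\mathcal T_{m_1}\mathcal B_0)=2e_{\mathcal B}$. Here the leading-order cancellation used in the other $k=3$ sub-cases becomes degenerate because the two dominant contributions to the wedge coordinates tie exactly at order $2e_{\mathcal B}$, forcing one to expand the Puiseux series $g_{i,\mathcal B}(x)=\alpha x^2+\alpha_1 x^{\beta_1}+\cdots$ one term further. I will identify which coordinate of $\hat R$ attains the minimum valuation by separating the sub-cases $\beta_1<3$ (where the next Puiseux term dominates) and $\beta_1\geq 3$ (where the cubic correction $\alpha^2 x^3$ from the binomial expansion dominates); in both regimes, the minimum valuation evaluates to $1+\min(\beta_1,3)$ per pro-branch, and summing over the $e_{\mathcal B}$ pro-branches yields $h_{m_1,\mathcal B_0}=e_{\mathcal B}(1+\min(\beta_1,3))$. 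Interpreting $\beta_1$ via $i_{m_1}(\mathcal B_0,Osc_{m_1}(\mathcal B_0))=e_{\mathcal B}\beta_1$, where $Osc_{m_1}(\mathcal B_0)$ is any smooth osculating curve, translates this into the form stated in the lemma; the alternative formulation in terms of the first characteristic exponent of $\mathcal B_0$ not divisible by $e_{\mathcal B}$ follows from the standard correspondence between probranch expansions and characteristic exponents.
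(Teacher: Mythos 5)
Your strategy coincides with the paper's: normalize coordinates so that the tangent of the branch becomes $V(y)$, write $\hat R_{i,\mathcal B}$ through the quantities $W_{\mathbf P,i,\mathcal B}$, read off $h_{m_1,i,\mathcal B}$ as the minimal valuation of the three wedge coordinates, and stratify by how many of $I,J,S$ lie on $\mathcal T_{m_1}\mathcal B_0$ and by whether $m_1$ is one of them; your treatment of case (11) (expanding $g_i=\alpha x^2+\alpha_1x^{\beta_1}+\cdots$ and splitting on $\beta_1<3$ versus $\beta_1\ge 3$) is exactly the paper's. However, there is a concrete error in the valuation you feed into the strata with two or three special points on the tangent. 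If $P\in\mathcal T_{m_1}\mathcal B_0$ but $P\ne m_1$, then after normalization $y_P=0$, $x_P\ne 0$, so $W_{\mathbf P,i,\mathcal B}(x)=-g_i'(x)x_P+z_P(xg_i'(x)-g_i(x))$ has valuation $val(g_i)-1$, not $1$; the two agree only for ordinary tangency $val(g_i)=2$. Carrying the value $1$ through case (6), for instance, would yield $h_{m_1,\mathcal B_0}=e_{\mathcal B_0}$ instead of $i_{m_1}(\mathcal B_0,\mathcal T_{m_1}\mathcal B_0)-e_{\mathcal B_0}$, and analogous discrepancies arise in cases (4) and (8)--(10) whenever the contact exceeds the ordinary one. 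Since several of the stated formulas are precisely distinguished by the difference between $val(g_i)-1$ and $1$, this is not a cosmetic slip.

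Beyond that, the substantive content of cases (4)--(10) is asserted rather than proved. Saying that the corrections $\min(i_{m_1}(\mathcal B_0,\mathcal T_{m_1}\mathcal B_0)-2e_{\mathcal B_0},0)$ and $-e_{\mathcal B_0}$ ``record which valuation wins'' is circular: determining the minimum requires writing each of the three coordinates of $(x,g_i(x),1)\wedge[\,\cdots]$ explicitly as a combination of $W_{\mathbf A}W_{\mathbf B}$, $W_{\mathbf{S'}}W_{\mathbf A}$, $W_{\mathbf{S'}}W_{\mathbf B}$, and verifying that the leading coefficient of the coordinate expected to realize the minimum does not cancel. The paper does this by choosing in each case a specific $M$ that places $\mathbf A$, $\mathbf B$, $\mathbf{S'}$ at coordinate points (or on $V(y)$ with suitable nonzero entries) and exhibiting the dominant monomial of one coordinate (e.g.\ the term $-g_i(x)z_A$ of valuation $val(g_i)$ in case (4), or $2g_i'(x)x_A-z_A(2xg_i'(x)-g_i(x))$ of valuation $val(g_i)-1$ in case (6)). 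These non-cancellation checks are exactly where the case distinctions of the lemma originate (including the boundary between cases (10) and (11)), so until they are carried out your argument remains an outline of the paper's proof rather than a proof.
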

\begin{proof}
We take $M$ such that $\mathcal T_{\mathcal B}=V(y)$ (with
$\mathcal B=M^{-1}(\mathcal B_0)$).
To simplify notations, we ommit indices $\mathcal B$ in 
$W_{\mathbf{P},i,\mathcal{B}}$ and consider $i\in\{1,...,e_{\mathcal B}\}$.
\begin{itemize}
\item Suppose that $ I, J,S \not \in\mathcal T_{m_1}\mathcal{B}_0$.
Then $W_{\mathbf{B},i}(0)=y_B\ne 0$, $W_{\mathbf{A},i}(0)=y_A\ne 0$ and $W_{\mathbf{S'},i}(0)=y_{S'}\ne 0$
so
\begin{eqnarray*}
\hat R_i(0)&=&\left(\begin{array}{c}0\\0\\1\end{array}\right)
      \wedge [y_Ay_B\cdot\mathbf{S'}-y_Ay_{S'}\cdot\mathbf{B}-y_By_{S'}\cdot\mathbf{A}]\\
&=&\left(\begin{array}{c}y_Ay_By_{S'}\\y_Ay_Bx_{S'}-y_Ay_{S'}x_B-y_By_{S'}x_A\\0\end{array}\right).
\end{eqnarray*}
Hence $h_{m_1,i,\mathcal B}=0$ and the sum over $i=1,...,e_{\mathcal B}$ of these
quantities is equal to $0$.

\item  Suppose $ I\in\mathcal T_{m_1}\mathcal{B}_0$, $ J,\mathcal S
   \not \in\mathcal T_{m_1}\mathcal{B}_0$ and
$m_1\ne I$.
Take $M$ such that $\mathbf{S'}(0,1,0)$, $\mathbf{A}(1,0,0)$, $y_B\ne 0$.
We have
$W_{\mathbf{B},i}(0)=y_B$, $W_{\mathbf{A},i}(0)=0$ and $W_{\mathbf{S'},i}(0)=1$
and so
$
\hat R_i(0)=\left(\begin{array}{c}0\\0\\1\end{array}\right)
      \wedge \left(\begin{array}{c}-y_B\\0\\0\end{array}\right)
   =\left(\begin{array}{c}0\\-y_B\\0\end{array}\right).$
Hence $h_{m_1,i,\mathcal B}=0$ and the sum over $i=1,...,e_{\mathcal B}$ of these
quantities is equal to $0$.

\item  Suppose $ I\in\mathcal T_{m_1}\mathcal{B}_0$, $J,S
   \not \in\mathcal T_{m_1}\mathcal{B}_0$ and
$m_1=  I$.
Take $M$ such that $\mathbf{S'}(0,1,0)$, $\mathbf{A}(0,0,1)$, $y_B\ne 0$.
We have
$W_{\mathbf{B},i}(x)=y_B-g_i'(x)x_B+z_B(xg_i'(x)-g_i(x))$, $W_{\mathbf{A},i}(x)=xg_i'(x)-g_i(x)$ and $W_{\mathbf{S'},i}(x)=1$
and so
\begin{eqnarray*}
\hat R_i(x)&=&\left(\begin{array}{c}x\\g_i(x)\\1\end{array}\right)
      \wedge \left(\begin{array}{c}-(xg_i'(x)-g_i)x_B\\
    (xg_i'(x)-g_i)(-g_i'(x)x_B+z_B(xg_i'(x)-g_i(x)))\\
     -y_B+g_i'(x)x_B-2z_B(xg_i'(x)-g_i(x))\end{array}\right)\\
   &=&\left(\begin{array}{c}-y_Bg_i(x)+x(g'_i(x))^2x_B-z_B((xg'_i(x))^2-(g_i(x))^2)\\
        -x_B(2xg'_i(x)-g_i(x))+xy_B+2xz_B(xg'_i(x)-g_i(x))\\
        -x_B(xg_i'(x)-g_i(x))^2+z_B(xg'_i(x)-g_i(x))
  \end{array}\right),
\end{eqnarray*}
the valuation of the coordinates of which are larger than or equal to $1$
and the valuation of the second coordinate is $1$.
Hence $h_{m_1,i,\mathcal B}=1$ and the sum over $i=1,...,e_{\mathcal B}=e_{\mathcal B_0}$ of these
quantities is equal to $e_{\mathcal B_0}$.

\item  Suppose $S\in\mathcal T_{m_1}\mathcal{B}_0$, $ I, J
   \not \in\mathcal T_{m_1}\mathcal{B}_0$ and
$m_1\ne S$.
Take $M$ such that $\mathbf{A}(0,1,0)$, $\mathbf{S'}(1,0,0)$, $y_B\ne 0$.
We have
$W_{\mathbf{B},i}(0)=y_B\ne 0$, $W_{\mathbf{S'},i}(0)=0$ and $W_{\mathbf{A},i}(0)=1$
and so
$\hat R_i(0)=\left(\begin{array}{c}0\\0\\1\end{array}\right)
      \wedge \left(\begin{array}{c}y_B\\0\\0\end{array}\right)
=\left(\begin{array}{c}0\\y_B\\0\end{array}\right).$
Hence $h_{m_1,i,\mathcal B}=0$ and the sum over $i=1,...,e_{\mathcal B}$ of these
quantities is equal to $0$.

\item  Suppose $m_1= S$ and $ I, J
   \not \in\mathcal T_{m_1}\mathcal{B}_0$.
Take $M$ such that $\mathbf{S'}(0,0,1)$, $\mathbf{A}(0,1,0)$, $y_B\ne 0$.
We have
$W_{\mathbf{B},i}(x)=y_B-g_i'(x)x_B+z_B(xg_i'(x)-g_i(x))$, $W_{\mathbf{S'},i}(x)=xg_i'(x)-g_i(x)$ and 
$W_{\mathbf{A},i}(x)=1$
and so
\begin{eqnarray*}
\hat R_i(x)&=&\left(\begin{array}{c}x\\g_i(x)\\1\end{array}\right)
      \wedge \left(\begin{array}{c}-(xg_i'(x)-g_i)x_B\\
    -(xg_i'(x)-g_i(x))(2y_B-g_i'(x)x_B+z_B(xg_i'(x)-g_i(x)))\\
     y_B-g_i'(x)x_B\end{array}\right)\\
   &=&\left(\begin{array}{c}g_i(x)(y_B-g_i'(x)x_B)+(xg_i'(x)-g_i(x))(2y_B-g_i'(x)x_B+z_B
      (xg_i'(x)-g_i(x)))\\
       -(xg_i'(x)-g_i(x))x_B-x(y_B-g_i'(x)x_B)\\
        -(xg_i'(x)-g_i(x))(2xy_B-g_i'(x)xx_B+z_Bx(xg_i'(x)-g_i(x))+g_i(x)(xg_i'(x)-g_i(x))
         x_B)  \end{array}\right),
\end{eqnarray*}
the valuation of the coordinates of which are larger than or equal to $1$
and the valuation of the second coordinate is $1$.
Hence $h_{m_1,i,\mathcal B}=1$ and the sum over $i=1,...,e_{\mathcal B}$ of these
quantities is equal to $e_{\mathcal B_0}$.

\item Suppose $\mathcal T_{m_1}\mathcal{B}_0=( IS)$, $ J\not\in
   \mathcal T_{m_1}\mathcal{B}_0$ and
$m_1\not\in\{ I, S\}$.
Take $M$ such that $\mathbf{S'}(1,0,0)$, $\mathbf{B}(0,1,0)$, $y_A=0$, $x_A\ne 0$, $z_A\ne 0$. We have
$W_{\mathbf{S'},i}(x)=-g_i'(x)$, $W_{\mathbf{A},i}(x)=-g_i'(x)x_A+z_A(xg_i'(x)-g_i(x))$ and $W_{\mathbf{B},i}(x)=1$
and so
\begin{eqnarray*}
\hat R_i(x)&=&\left(\begin{array}{c}x\\g_i(x)\\1\end{array}\right)
      \wedge \left(\begin{array}{c}z_A(xg_i'(x)-g_i(x))\\
    -(g_i'(x))^2x_A+g_i'(x)(xg_i'(x)-g_i(x))z_A\\
       g_i'(x)z_A\end{array}\right)  \\
    &=&\left(\begin{array}{c}
    g_i(x)g_i'(x)z_A+(g_i'(x))^2x_A-g_i'(x)(xg_i'(x)-g_i(x))z_A\\
    -g_i(x)z_A\\  -x(g_i'(x))^2x_A+(xg_i'(x)-g_i(x))^2z_A\end{array}\right),
\end{eqnarray*}
the valuation of the coordinates of which are respectively $2 val(g_i)-2$, $val(g_i)$ 
and $2 val(g_i)-1$.
Hence $h_{m_1,i,\mathcal B}=val(g_i)+\min(val(g_i)-2,0)$ 
and the sum over $i=1,...,e_{\mathcal B}$ of these
quantities is equal to $i_{m_1}(\mathcal B_0,\mathcal T_{m_1}{\mathcal B_0})+
\min(i_{m_1}(\mathcal B_0,\mathcal T_{m_1}\mathcal B_0)-2e_{\mathcal B_0},0)   $.
\item Suppose $\mathcal T_{m_1}\mathcal{B}_0=( I S)$, $ J\not\in
   \mathcal T_{m_1}\mathcal{B}_0$ and
$m_1= I$.
Take $M$ such that $\mathbf{S'}(1,0,0)$, $\mathbf{B}(0,1,0)$, $\mathbf{A}(0,0,1)$. We have
$W_{\mathbf{S'},i}(x)=-g_i'(x)$, $W_{\mathbf{A},i}(x)=xg_i'(x)-g_i(x)$ and $W_{\mathbf{B},i}(x)=1$
and so
$$
\hat R_i(x)=\left(\begin{array}{c}x\\g_i(x)\\1\end{array}\right)
      \wedge \left(\begin{array}{c}xg_i'(x)-g_i(x)\\g_i'(x)(xg_i'(x)-g_i(x))\\
       g_i'(x)\end{array}\right)
    =\left(\begin{array}{c}
    g'_i(x)(2g_i(x)-xg_i'(x))\\
    -g_i(x)\\  (xg_i'(x)-g_i(x))^2\end{array}\right),
$$
the valuation of the coordinates of which are larger than or equal to $val(g_i)$,
the second coordinate has valuation $val(g_i)$.
Hence $h_{m_1,i,\mathcal B}=val(g_i)$ and the sum over $i=1,...,e_{\mathcal B}$ of these
quantities is equal to $i_{m_1}(\mathcal B_0,\mathcal T_{m_1}{\mathcal B_0})$.

\item Suppose $\mathcal T_{m_1}\mathcal{B}_0=( IS)$, $ J\not\in
   \mathcal T_{m_1}\mathcal{B}_0$ and
$m_1=S$.
Take $M$ such that $\mathbf{A}(1,0,0)$, $\mathbf{B}(0,1,0)$, $\mathbf{S'}(0,0,1)$. We have
$W_{\mathbf{S'},i}(x)=xg_i'(x)-g_i(x)$, $W_{\mathbf{A},i}(x)=-g_i'(x)$ and $W_{\mathbf{B},i}(x)=1$
and so
$$
\hat R_i(x)=\left(\begin{array}{c}x\\g_i(x)\\1\end{array}\right)
      \wedge \left(\begin{array}{c}-(xg_i'(x)-g_i(x))\\g_i'(xg_i'(x)-g_i(x))\\
       -g_i'(x)\end{array}\right)
  =\left(\begin{array}{c}
    -x(g_i'(x))^2\\ g_i(x)  \\  (xg_i'(x))^2-(g_i(x))^2\end{array}\right),
$$
the valuation of the coordinates of which being larger than or equal to $val(g_i)$
and the valuation of the second coordinate is equal to $val(g_i)$.
Hence $h_{m_1,i,\mathcal B}=val(g_i)$ and the sum over $i=1,...,e_{\mathcal B}$ of these
quantities is equal to $i_{m_1}(\mathcal B_0,\mathcal T_{m_1}{\mathcal B_0})$.

\item  Suppose $\mathcal T_{m_1}\mathcal{B}_0=( I J)$, $S\not\in
   \mathcal T_{m_1}\mathcal{B}_0$ and
$m_1\not\in\{ I, J\}$.
Take $M$ such that $\mathbf{S'}(0,1,0)$, $\mathbf{B}(1,0,0)$, $y_A=0$, $x_A\ne 0$, $z_A\ne 0$. We have
$W_{\mathbf{B},i}(x)=-g_i'(x)$, $W_{\mathbf{A},i}(x)=-g_i'(x)x_A+z_A(xg_i'(x)-g_i(x))$ and $W_{\mathbf{S'},i}(x)=1$
and so
\begin{eqnarray*}
\hat R_i(x)&=&\left(\begin{array}{c}x\\g_i(x)\\1\end{array}\right)
      \wedge \left(\begin{array}{c}2g_i'(x)x_A-z_A(xg_i'(x)-g_i(x))\\
   (g_i'(x))^2x_A-g_i'(x)(xg_i'(x)-g_i(x))z_A\\
       g_i'(x)z_A\end{array}\right)\\
   &=&\left(\begin{array}{c}
    (g_i'(x))^2(xz_A-x_A)\\
    2g_i'(x)x_A-z_A(2xg_i'(x)-g_i(x))\\  -z_A(xg_i'(x)-g_i(x))^2
        +x_Ag_i'(x)(xg_i'(x)-2g_i(x))\end{array}\right),
\end{eqnarray*}
the valuation of the coordinates of which are respectively $2 val(g_i)-2$, $val(g_i)-1$ 
and larger than $val(g_i)$.
Hence $h_{m_1,i,\mathcal B}=val(g_i)-1$ and the sum over $i=1,...,e_{\mathcal B}$ of these
quantities is equal to $i_{m_1}(\mathcal B_0,\mathcal T_{m_1}\mathcal{B}_0)-e_{\mathcal B_0}$.

\item  Suppose that $\mathcal T_{m_1}\mathcal{B}_0=( I J)$, that $S\not\in
   \mathcal T_{m_1}\mathcal{B}_0$ and $m_1= I$.
Take $M$ such that $\mathbf{S'}(0,1,0)$, $\mathbf{B}(1,0,0)$, $\mathbf{A}(0,0,1)$. We have
$W_{\mathbf{B},i}(x)=-g_i'(x)$, $W_{\mathbf{A},i}(x)=xg_i'(x)-g_i(x)$ and $W_{\mathbf{S'},i}(x)=1$
and so
$$
\hat R_i(x)=\left(\begin{array}{c}x\\g_i(x)\\1\end{array}\right)
      \wedge \left(\begin{array}{c}-(xg_i'(x)-g_i(x))\\
   -g_i'(x)(xg_i'(x)-g_i(x))\\
       g_i'(x)\end{array}\right)
   =\left(\begin{array}{c}
    x(g_i'(x))^2\\
    -(2xg_i'(x)-g_i(x))\\  -(xg_i'(x)-g_i(x))^2\end{array}\right),
$$
the valuation of the coordinates of which being larger than or equal to $val(g_i)$ and
the valuation of the second coordinate is equal to $val(g_i)$.
Hence $h_{m_1,i,\mathcal B}=val(g_i)$ and the sum over $i=1,...,e_{\mathcal B}$ of these
quantities is equal to $i_{m_1}(\mathcal B_0,\mathcal T_{m_1}{\mathcal B_0})$.

\item  Suppose that $ I, J,S\in\mathcal T_{m_1}\mathcal{B}_0$ and
$m_1\not\in\{ I, J, S\}$.
Take $M$ such that $\mathbf{S'}(1,0,0)$, $y_A=y_B=0$, $x_A\ne 0$, $z_A\ne 0$, $x_B\ne 0$,
$z_B\ne 0$, $x_Az_B\ne x_Bz_A$. 
We have
$W_{\mathbf{S'},i}(x)=-g_i'(x)$, $W_{\mathbf{A},i}(x)=-g_i'(x)x_A+z_A(xg_i'(x)-g_i(x))$ and $W_{\mathbf{B},i}(x)=
   -g_i'(x)x_B+z_B(xg_i'(x)-g_i(x))$
and so
\begin{eqnarray*}
\hat R_i(x)&=&\left(\begin{array}{c}x\\g_i(x)\\1\end{array}\right)
      \wedge \left(\begin{array}{c}-x_A(g_i'(x))^2x_B+z_Az_B(xg_i'(x)-g_i(x))^2\\0
     \\-(g_i'(x))^2(xz_B+x_Bz_A)_A+2z_Az_Bg_i'(x)(xg_i'(x)-g_i(x))\end{array}\right)\\
    &=&\left(\begin{array}{c}
      -g_i(x)(g_i'(x))^2(xz_B+x_Bz_A)_A+2z_Az_Bg_i(x)g_i'(x)(xg_i'(x)-g_i(x))\\
      -x_A(g_i'(x))^2x_B+z_Az_B(xg_i'(x)-g_i(x))^2-x[....]\\   
      x_Ag_i(x)(g_i'(x))^2x_B-z_Az_Bg_i(x)(xg_i'(x)-g_i(x))^2
     \end{array}\right),
\end{eqnarray*}
the valuation of the coordinates of which are larger than or equal to $2 val(g_i)-2$, 
the valuation of the second coodinate is $2 val(g_i)-2$.
Hence $h_{m_1,i,\mathcal B}=2val(g_i)-2$ and the sum over $i=1,...,e_{\mathcal B}$ of these
quantities is equal to $2i_{m_1}(\mathcal B_0,\mathcal T_{m_1}{\mathcal B_0})-2e_{\mathcal B_0}$.

\item Suppose that $ I, J,S\in\mathcal T_{m_1}\mathcal{B}_0$ and
$m_1= J$.
Take $M$ such that $\mathbf{B}(0,0,1)$, $\mathbf{S'}(1,0,0)$, $y_A=0$, $x_A\ne 0$ and $z_A\ne 0$.
We have
$W_{\mathbf{S'},i}(x)=-g_i'(x)$, $W_{\mathbf{A},i}(x)=-g_i'(x)x_A+z_A(xg_i'(x)-g_i(x))$ and $W_{\mathbf{B},i}(x)=
   xg_i'(x)-g_i(x)$
and so
\begin{eqnarray*}
\hat R_i(x)&=&\left(\begin{array}{c}x\\g_i(x)\\1\end{array}\right)
      \wedge \left(\begin{array}{c}z_A(xg_i'(x)-g_i(x))^2\\0
      \\-x_A(g_i'(x))^2+2z_A(xg_i'(x)-g_i(x))g_i'(x)\end{array}\right)\\
  &=&\left(\begin{array}{c}
    g_i(x)g_i'(x)(-g_i'(x)x_A+2z_A(xg_i'(x)-g_i(x)))\\
    z_A(xg_i'(x)-g_i(x))^2-xg_i'(x)(-g_i'(x)x_A+2z_A(xg_i'(x)-g_i(x)))\\
    -g_i(x)z_A(xg_i'(x)-g_i(x))^2\end{array}\right),
\end{eqnarray*}
the valuation of the coordinates of which are larger than or equal to $2 val(g_i)-1$
and the valuation of the second coordinate is $2 val(g_i)-1$.
Hence $h_{m_1,i,\mathcal B}=2val(g_i)-1$ and the sum over $i=1,...,e_{\mathcal B}$ of these
quantities is equal to $2i_{m_1}(\mathcal B_0,\mathcal T_{m_1}{\mathcal B_0})-e_{\mathcal B_0}$.

\item Suppose that $ I, J,S\in\mathcal T_{m_1}\mathcal{B}_0$ and
$m_1=S$.
Take $M$ such that $\mathbf{S'}(0,0,1)$, $\mathbf{B}(1,0,0)$, $y_A=0$, $x_A\ne 0$ and $z_A\ne 0$.
We have
$W_{\mathbf{B},i}(x)=-g_i'(x)$, $W_{\mathbf{A},i}(x)=-g_i'(x)x_A+z_A(xg_i'(x)-g_i(x))$ and $W_{\mathbf{S'},i}(x)=
   xg_i'(x)-g_i(x)$
and so
\begin{eqnarray*}
\hat R_i(x)&=&\left(\begin{array}{c}x\\g_i(x)\\1\end{array}\right)
      \wedge \left(\begin{array}{c}2x_Ag_i'(x)(xg_i'(x)-g_i(x))-z_A(xg_i'(x)-g_i(x))^2\\0
      \\ (g_i'(x))^2x_A\end{array}\right)\\
   &=&\left(\begin{array}{c}
    g_i(x)(g_i'(x))^2x_A\\
   x_Ag_i'(x)(xg_i'(x)-2g_i(x))-z_A(xg_i'(x)-g_i(x))^2\\
   -2x_Ag_i(x)g_i'(x)(xg_i'(x)-g_i(x))+z_Ag_i(x)(xg_i'(x)-g_i(x))^2\end{array}\right).
\end{eqnarray*}
The valuation of the first coordinate is $3 val(g_i)-2$ is smaller than or equal to
the valuation of the third coordinate.

If $val(g_i)\ne 2$, the valuation of the second coordinate is $2 val(g_i)-1$;
hence $h_{m_1,i,\mathcal B}=2val(g_i)-1$ and the sum over $i=1,...,e_{\mathcal B}$ of these
quantities is equal to $2i_{m_1}(\mathcal B_0,\mathcal T_{m_1}{\mathcal B_0})-e_{\mathcal B_0}$.

Suppose now that $val(g_i)= 2$, then $3 val(g_i)-2=4$ and 
there exist $\alpha,\alpha_1\in\mathbb C$
and $\beta_1>2$ such that $g_i(x)=\alpha x^2+\alpha_1 x^{\beta_1}+...$.
Then, the second coordinate has the following form 
$(x_A2\alpha(\beta_1-2)x^{\beta_1+1}+...)+x^4(...).$
Therefore $h_{m_1,i,\mathcal B}=\min(\beta_1+1,4)$ 
and the sum over $i=1,...,e_{\mathcal B}$ of these
quantities is equal to $e_{\mathcal B_0}(1+\min(\beta_1,3))$.
\end{itemize}
\end{proof}
\begin{proof}[Proof of Theorem \ref{THM1}]
Recall that \refeq{EQfin} says
$$
mclass(\Sigma_S(\mathcal C))=2d^\vee+d-\sum_{m_1\in\mathcal C}\sum_{\mathcal{B}_0\in Branch_{m_1}(\C)}h_{m_1,\mathcal B_0}$$
and that the values of $h_{m_1,\mathcal B_0}$ have been
given in Lemma \ref{LEM}.
\begin{itemize}
\item Assume first $S\not\in\ell_\infty$. Then we have to sum
the $h_{m_1,\mathcal B_0}$ coming from Items 3, 4, 5, 6 and 7
of Lemma \ref{LEM}.

The sum of the $h_{m_1,\mathcal B_0}$ coming from Items 3 and 5 applied with
$m_1=S$ gives directly $g'$.

The sum of the $h_{m_1,\mathcal B_0}$ coming from Items 3, 5 and 7 applied with
$m_1\in\{I,J\}$ gives $f+\Omega_I(\C,\ell_\infty)+\Omega_J(\C,\ell_\infty)$.

The sum of the $h_{m_1,\mathcal B_0}$ coming from Item 6 gives $g-\Omega_I(\C,\ell_\infty)-\Omega_J(\C,\ell_\infty)$.

The sum of the $h_{m_1,\mathcal B_0}$ coming from Item 4 gives $2f'-q'$
(notice that 
$h_{m_1,\mathcal B_0}=2(i_{m_1}(\mathcal{B}_0,\mathcal T_{m_1}\mathcal{B}_0)-e_{\mathcal{B}_0})-(i_{m_1}(\mathcal{B}_0,\mathcal T_{m_1}\mathcal{B}_0)-2e_{\mathcal{B}_0})\mathbf 1_{i_{m_1}(\mathcal{B}_0,\mathcal T_{m_1}\mathcal{B}_0)\ge 2 e_{\mathcal{B}_0}}$).
\item Assume first $S\not\in\ell_\infty$. Then we have to sum
the $h_{m_1,\mathcal B_0}$ coming from Items 3, 8, 9, 10 and 11
of Lemma \ref{LEM}.

The sum of the $h_{m_1,\mathcal B_0}$ coming from Items 3 (with $m_1=S$), 10 and 11 gives $2\Omega_S(\C,\ell_\infty)+\mu_S+c'(S)$.

The sum of the $h_{m_1,\mathcal B_0}$ coming from Items 3 and 9 applied with
$m_1\in\{I,J\}$ gives 
$2(\Omega_I(\C,\ell_\infty)+\Omega_J(\C,\ell_\infty))+\mu_I+\mu_J$.

The sum of the $h_{m_1,\mathcal B_0}$ coming from Item 8 gives
$2(g-\Omega_I(\C,\ell_\infty)-\Omega_J(\C,\ell_\infty)-\Omega_S(\C,\ell_\infty))$.
\end{itemize}
\end{proof}
%
%
%
%
%

%
%
%
%
%
%
\begin{appendix}

\section{Intersection numbers of curves and pro-branches}\label{append2}
The following result expresses the classical intersection number
$i_{m_1}(\mathcal C,\mathcal C')$ defined in \cite[p. 54]{Hartshorne}
thanks to the use of probranches.
\begin{prop}\label{formulemultiplicite}
Let $m\in\mathbb P^2$. 
Let $\mathcal C=V(F)$ and $\mathcal C'=V(F')$ be two algebraic plane curves
containing $m$, with homogeneous polynomials $F,F'\in 
\mathbb{C}[X,Y,Z]$. 
Let $M\in GL(\mathbb C^3)$ be such that $\Pi(M(\mathbf{P}_0))=m$ and such that
the tangent cones of $V(F\circ M)$ and of $V(F'\circ M)$ do not contain $X=0$.

Assume that $V(F\circ M)$ admits $b$ branches at $P_0$ and that its
$\beta$-th branch $\mathcal B_\beta$ has multiplicity $e_{\beta}$.
Assume that $V(F'\circ M)$ admits $b'$ branches at $P_0$ and that its
$\beta'$-th branch $\mathcal B'_{\beta'}$ has multiplicity $e'_{\beta'}$.

Then we have
$$i_{m}(\mathcal C,\mathcal C')
  =\sum_{\beta=1}^{b}\sum_{j=0}^{e_\beta-1}\sum_{\beta'=1}^{b'}\sum_{j'=0}^{e'_{\beta'}-1}
  val_{x}[h_{\beta}(\zeta^{j}x^{\frac{1}{e_\beta}} )
   -h'_{\beta'}({\zeta'}^{j'}x^{\frac{1}{e'_{\beta'}}}) ],$$
with
$y=h_{\beta}(\zeta^{j}x^{\frac{1}{e_\beta}}) \in\mathbb C\langle x^*\rangle$ an
equation of the $j$-th probranch of $\mathcal B_\beta$
at $P_0$,
$y=h'_{\beta'}({\zeta'}^{j'}x^{\frac{1}{e'_{\beta'}}}) \in\mathbb C\langle x^*\rangle$
an equation of the $k'$-th probranch of $\mathcal B'_{\beta'}$
at $P_0$, with $\zeta:=e^{\frac {2i\pi}{e_\beta}}$
and $\zeta':=e^{\frac {2i\pi}{e'_{\beta'}}}$.
\end{prop}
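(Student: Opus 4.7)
The plan is to reduce the computation to the classical resultant formula in the Weierstrass--Puiseux framework and then to read off the claimed expression branch by branch. Throughout, $i_m(\mathcal{C},\mathcal{C}')$ is the local invariant $\dim_\mathbb{C}\mathcal{O}_{\mathbb{P}^2,m}/(F,F')$ from \cite[p.~54]{Hartshorne}, and this invariant is preserved by the linear change of coordinates $M$. So I would replace $F,F'$ by $\hat F:=F\circ M$ and $\hat F':=F'\circ M$, dehomogenize in the chart $Z=1$, and identify
$$
i_{m}(\mathcal{C},\mathcal{C}')\;=\;\dim_{\mathbb{C}}\mathbb{C}\{x,y\}/(\hat F,\hat F').
$$

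Next I would use the hypothesis on the tangent cones. The condition that $V(X)$ is not in the tangent cone of $V(\hat F)$ at $P_0$ is exactly the Weierstrass-regularity of $\hat F$ with respect to $y$: it says $\hat F(0,y)$ has order $\mu=\sum_{\beta}e_\beta$ in $y$. The Weierstrass Preparation Theorem then gives $\hat F=U\cdot f$ with $U\in\mathbb{C}\{x,y\}^{\times}$ and $f\in\mathbb{C}\{x\}[y]$ a distinguished polynomial of degree $\mu$, and Puiseux's theorem further factors $f$ as
$$
f(x,y)\;=\;\prod_{\beta=1}^{b}\prod_{j=0}^{e_\beta-1}\bigl(y-h_\beta(\zeta^{j}x^{1/e_\beta})\bigr),
$$
the inner product grouping the $e_\beta$ Galois-conjugate pro-branches of the branch $\mathcal{B}_\beta$. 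The same procedure applied to $\hat F'$ gives $\hat F'=U'\cdot f'$ with $f'=\prod_{\beta',j'}\bigl(y-h'_{\beta'}({\zeta'}^{j'}x^{1/e'_{\beta'}})\bigr)$. Since $U,U'$ are units, $(\hat F,\hat F')=(f,f')$ as ideals and hence $i_m(\mathcal{C},\mathcal{C}')=\dim_{\mathbb{C}}\mathbb{C}\{x,y\}/(f,f')$.

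Now I would invoke the classical resultant identity for two Weierstrass polynomials in $y$ over $\mathbb{C}\{x\}$:
$$
\dim_{\mathbb{C}}\mathbb{C}\{x,y\}/(f,f')\;=\;\mathrm{val}_{x}\bigl(\mathrm{Res}_{y}(f,f')\bigr).
$$
Using $\mathrm{Res}_{y}(f,f')=\prod_{\beta,j}f'\bigl(x,h_\beta(\zeta^{j}x^{1/e_\beta})\bigr)$ and substituting the Puiseux factorization of $f'$ yields
$$
\mathrm{Res}_{y}(f,f')\;=\;\prod_{\beta,j}\prod_{\beta',j'}\Bigl(h_\beta(\zeta^{j}x^{1/e_\beta})-h'_{\beta'}({\zeta'}^{j'}x^{1/e'_{\beta'}})\Bigr),
$$
and the valuation of this product is the sum of the valuations of each factor, which is exactly the right-hand side of the stated formula.

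The only points that need care, and the main technical obstacle, are (i) verifying the resultant identity $\dim=\mathrm{val}_{x}(\mathrm{Res}_{y})$ in the local ring $\mathbb{C}\{x,y\}$ (one reduces via the isomorphism $\mathbb{C}\{x,y\}/(f)\simeq\mathbb{C}\{x\}[y]/(f)$ as a finite free $\mathbb{C}\{x\}$-module of rank $\deg_{y}f$, so that $f'$ acts by multiplication with determinant $\mathrm{Res}_{y}(f,f')$ up to sign, and then that $\dim_{\mathbb{C}}\mathbb{C}\{x\}/(\mathrm{Res}_y)$ equals its $x$-valuation) and (ii) passing freely between Puiseux series in $x$ and power series in $x^{1/e_\beta}$, i.e.\ summing valuations over Galois conjugates. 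Once these are in hand the stated formula follows immediately.
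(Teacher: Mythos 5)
Your proposal is correct and follows essentially the same route as the paper: both proofs reduce to the local ring $\mathbb{C}\{x,y\}/(f,f')$ after a Weierstrass preparation justified by the tangent-cone hypothesis, factor $f$ and $f'$ into pro-branches via Puiseux, and extract the intersection number as the $x$-valuation of the product of differences of Puiseux roots. The only difference is organizational: you invoke the global identity $\dim_{\mathbb{C}}\mathbb{C}\{x,y\}/(f,f')=\mathrm{val}_x(\mathrm{Res}_y(f,f'))$ at once, whereas the paper first splits the Artinian quotient into branch-indexed factors and passes through the ramified extensions $\mathbb{C}\langle x\rangle\hookrightarrow\mathbb{C}\langle x^{1/e_\beta}\rangle$ before applying the resultant branch by branch.
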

With the notations of Proposition \ref{formulemultiplicite},
we get
\begin{equation}\label{branche}
i_{m}(\mathcal C,\mathcal C')=\sum_{\beta=1}^bi_{P_0}(\mathcal B_\beta,V(F')),
\end{equation}
with the usual definition given in \cite{Wall} of intersection number of a branch
with a curve
\begin{equation}\label{branche2}
i_{P_0}(\mathcal B_\beta,V(F'\circ M))=\sum_{j=0}^{e_{\beta}-1}val_x(F'\circ M
   (x,h_{j,\beta}
     (\zeta^{j}x^{\frac{1}{e_\beta}}))).
\end{equation}
\begin{proof}[Proof of Proposition \ref{formulemultiplicite}]
By definition, the intersection number is defined by 
$$i_{m}(\mathcal C,\mathcal C'
)=i_{P_0}(V(F\circ M,F'\circ M)=
\mbox{length}\left(\left(\frac{\mathbb{C}[X,Y,Z]}{(F\circ M,F'\circ M)}\right)_{(X,Y,Z)}\right)$$ 
where $(\frac{\mathbb{C}[X,Y,Z]}{
(F\circ M,F'\circ M)})_{(X,Y,Z)}$ is the local ring in the
maximal ideal $(X,Y,Z)$ of $P_0$ \cite[p. 53]{Hartshorne}.
According to \cite{Fulton}, we have
\[
i_{m}(\mathcal C,\mathcal C ')=\dim_{\mathbb{C}}\left(\left(\frac{\mathbb{C}[X,Y,Z]}{
(F\circ M,F'\circ M)}\right)_{(X,Y,Z) }\right)
\]
Let $f,f'$ be defined by $f(x,y)=F\circ M(x,y,1)$, $f'(x,y)=F^{\prime }
   \circ M(x,y,1)$.
We get
$$i_{m}(\mathcal C,\mathcal C ')=\dim _{\mathbb{C}}\left(\left(\frac{\mathbb{C}[x,y]}{
(f,f')}\right)_{(x,y)}\right)=\dim_{\mathbb C}
   \frac{\mathbb{C}\langle x,y\rangle}{(f,f')}.$$
Recall that, according to the Weierstrass preparation theorem,
there exist two units $U$ and $U'$ of $\mathbb C\langle x,y\rangle$ and
$f_1,...,f_b,f'_1,...,f'_{b'}\in\mathbb C\langle x\rangle[y]$ monic irreducible
such that
$$f=U\prod_{\beta=1}^bf_\beta\ \ \mbox{and}\ \  f'=U'\prod_{\beta'=1}^{b'}f'_{\beta'},$$ 
$f_\beta=0$ being an equation of $\mathcal B_\beta$ and
$f'_{\beta'}=0$ being an equation of $\mathcal B'_{\beta'}$.
According to the Puiseux theorem,
$\mathcal B_\beta$ (resp. $\mathcal B'_{\beta'}$) 
admits a parametrization
\[
\left\{ 
\begin{array}{c}
x=t^{e_{\beta}} \\ 
x=h_{\beta}(t)\in \mathbb{C}\langle t\rangle
\end{array}
\right. \ \mbox{(resp. }\left\{ 
\begin{array}{c}
x=t^{e'_{\beta'}} \\ 
x=h_{\beta'}' (t)\in \mathbb{C}\langle t\rangle
\end{array}
\right. ). 
\]
We know that, for every $\beta\in \{1,..,b\}$ and every
$j\in \{0,..,e_{\beta}\}$,
$h_{\beta}(\zeta ^{j}x^{\frac{1}{e_{\beta}}})\in 
\mathbb{C}\langle x^{\frac{1}{e_{\beta}}}\rangle$
are the $y$-roots of $f_{\beta}$ (resp. $h_{\beta'}({\zeta'} ^{j'}x^{\frac{1}{
e'_{\beta'}}} )\in \mathbb{C}\langle x^{\frac{1}{e'_{\beta'}}}\rangle$
are the $y$-roots of $f'_{\beta'}$).
In particular, we have
$$f_{\beta}(x,y)=\prod_{j=0}^{e_{\beta}-1}(y-h_{\beta}
     (\zeta ^{j}x^{\frac{1}{e_{\beta}}}))
   \ \ \mbox{and}\ \ f'_{\beta'}(x,y)=\prod_{j'=0}^{e'_{\beta'}-1}(y-h'_{\beta'}
     ({\zeta'} ^{j'}x^{\frac{1}{e'_{\beta'}}})).$$
Therefore we have the following
sequence of $\mathbb{C}$-algebra-isomorphisms:
$$\frac{\mathbb{C}\langle x,y\rangle}{(f,f')}=\frac{\mathbb{C}\langle x,y\rangle}{
(\prod_{\beta=1}^{b}f_{\beta}(x,y),f'(x,y))}\cong
\prod_{\beta=1}^{b}A_{\beta},$$
where $A_{\beta}:=\frac{\mathbb{C}\langle x,y\rangle}{
(f_{\beta}(x,y),f'(x,y))}$.
Let $\beta\in\{1,...,b\}$.
We observe that we have
$$A_\beta=\prod_{j=0}^{e_\beta-1}\frac{\mathbb C\langle x\rangle}
  {(f'(x,h_\beta(\xi^jx^{\frac 1 e_\beta})))}.$$
On another hand, we have
\begin{eqnarray*}
D_{\beta}&:=&\frac{\mathbb{C}\langle x^{\frac{1}{e_{\beta}}
  },y\rangle}{(f_{\beta}(x,y),f'(x,y))}
  =\frac{\mathbb{C}\langle x^{\frac{1}{e_{\beta}}},y\rangle}{(\prod
_{j=0}^{e_{\beta}-1}(y-h_{\beta}(\zeta ^{j}x^{\frac{1}{e_{\beta}}})),f'(x,y))}\\
&\cong& \prod_{j=0}^{e_{\beta}-1}\frac{\mathbb{C}\langle x^{\frac{1}{e_{\beta}}},y\rangle
}{(y-h_{\beta}(\zeta ^{j}x^{\frac{1}{e_{\beta}}}),f'(x,y))}\cong
\prod_{j=0}^{e_{\beta}-1} D_{\beta,j}
\end{eqnarray*}
with
$$D_{\beta,j}:=\frac{\mathbb{C}\langle x^{\frac{1}{e_{\beta}}}\rangle }{
(f'(x,h_{\beta}(\zeta ^{j}x^{\frac{1}{e_{\beta}}})))}.$$
We consider now the natural extension of rings $i_\beta:A_{\beta,j}\hookrightarrow 
D_{\beta,j}$ such that
$$\forall g\in A_\beta,\ \ \ val_{x^{1/e_\beta}}((i_\beta(g))(x))=e_\beta val_x(g(x)).$$
We have 
$$D_{\beta}\cong \prod_{j=0}^{e_{\beta}-1}\frac{\mathbb{C}\langle x^{\frac{1}{
e_{\beta}}}\rangle }{(x^{v_{\beta}})},$$ 
where $v_{\beta}$ is the valuation in 
$x^{\frac{1}{e_{\beta}
}}$ of $(f'(x,h_{\beta}(\zeta ^{j}x^{\frac{1}{e_{\beta}}})))$, i.e.
$$v_{\beta}:=val_{t}(f'(t^{e_\beta},h_{\beta}(\zeta ^{j}t ) ))=
   e_\beta\, val_{x}(f'(x,h_{\beta}(\zeta ^{j}x^{\frac 1{e_\beta}} ) )).$$
We get
\begin{eqnarray*}
i_{m}(\mathcal C,\mathcal C ')&=&\sum_{\beta=1}^{b}\dim _{\mathbb{C}
}A_{\beta}=\sum_{\beta=1}^{b}\sum_{j=0}^{e_{\beta}-1}\frac{1}{e_{\beta}}
val_{t}(f'(t^{e_\beta},h_{\beta}(\zeta ^{j}t ) ))
  \\
&=&\sum_{\beta=1}^{b}\sum_{j=0}^{e_{\beta}-1}
val_{x}(f'(x,h_{\beta}(\zeta ^{j}x^{\frac 1{e_\beta}} ) ))
=\sum_{\beta=1}^{b}\sum_{j=0}^{e_{\beta}-1}\sum
_{\beta'=1}^{b'}val_{x}(f'_{\beta'}(x,h_{\beta}({\zeta}^{j}x^{\frac{1}{e_{\beta}}}))).
\end{eqnarray*}
Observe now that
$$val_{x}(f'_{\beta'}(x,h_{\beta}(\zeta ^{j}x^{\frac{1}{
e_{\beta}}})))\in \frac{1}{e_{\beta}}\mathbb{N}$$
and that
\[
f'_{\beta'}(x,h_{\beta}(\zeta ^{j}x^{\frac{1}{e_{\beta}}}))\equiv 
Res(f'_{\beta'},f_{\beta};y)\equiv
\prod_{j'=0}^{e'_{\beta'}-1}(h'_{\beta'}
    ({\zeta'} ^{j'}x^{\frac{1}{e'_{\beta'}}})-h_{\beta}(\zeta ^{j}x^{
\frac{1}{e_{\beta}}}  )), 
\]
where $Res$ denotes the resultant and where $\equiv $ means "up to a non zero scalar".
Finally, we get
\[
i_{m}(\mathcal C,\mathcal C')=
\sum_{\beta=1}^{b}\sum_{j=0}^{e_{\beta}-1}\sum_{\beta'=1}^{b'}
 \sum_{j'=0}^{e'_{\beta'}-1}val_{x}[h'_{\beta'}({\zeta'}^{j'}x^{\frac{1}{e'_{\beta'}
    }} )-h_{\beta}(\zeta ^{j}x^{\frac{1}{e_{\beta}}})]. 
\]
\end{proof}

\end{appendix}\medskip

\noindent 
{\bf Acknowledgements~:\/}

The authors thank Jean Marot for stimulating discussions and for having indicated them
the formula of Brocard and Lemoyne.


\begin{thebibliography}{00}
\bibitem{Brocard-Lemoyne}H.~Brocard, T.~Lemoyne.
Courbes g\'eom\'etriques remarquables. Courbes sp\'eciales planes et gauches. Tome I. 
(French) Nouveau tirage Librairie Scientifique et Technique Albert Blanchard, Paris (1967)
viii+451 pp.
\bibitem{BGG1}J. W. Bruce, P. J. Giblin and C. G. Gibson.
{\em Source genericity of caustics by reflexion in the plane}, 
Quarterly Journal of Mathematics (Oxford) (1982) Vol. 33 (2)
   pp. 169--190.
\bibitem{BGG2}J. W. Bruce, P. J. Giblin and C. G. Gibson.
{\em On caustics by reflexion}, 
Topology (1982) Vol. 21 (2) pp. 179--199.
\bibitem{BG}J. W. Bruce and P. J. Giblin.
{\em Curves and singularities},
Cambridge university press (1984).
\bibitem{Catanese}F.~Catanese. {\em Caustics of plane
curves and their birationality}. Preprint.
\bibitem{CataneseTrifogli}F.~Catanese and C.~Trifogli.
{\em Focal loci of algebraic varieties. I. Special issue in honor of Robin Hartshorne}. 
Comm. Algebra 28 (2000), no. 12, pp. 6017--6057.  
\bibitem{Chasles} M.~Chasles. {\em D\'etermination, par le principe des correspondances,
de la classe de la d\'evelopp\'ee et de la caustique par r\'eflexion d'une courbe g\'eom\'etrique
d'ordre m et de classe n} Nouv. Ann. Math.
2 ser. vol. 10 (1871), p. 97--104, extrait C. R. s\'eances A. S. t. LXII.
\bibitem{coolidge} J.~L.~Coolidge. A treatise on algebraic plane curves, Dover, Phenix edition
(2004).
\bibitem{Dandelin} G.~P.~Dandelin. {\em Notes sur les caustiques par r\'eflexion}, (1822).
\bibitem{Fantechi} B.~Fantechi. {\em The Evolute of a Plane Algebraic Curve}, (1992) UTM 408,
University of Trento.
\bibitem{Fisher}G.~Fischer. {\em Plane algebraic curves}, AMS  2001.
\bibitem{Fulton}W.~Fulton. {\em Intersection Theory}, 2nd ed., Springer, 1998.
\bibitem{Halphen}G.~H.~Halphen. {\em M\'emoire sur les points singuliers des courbes 
alg\'ebriques planes}. Acad\'emie des Sciences t. XXVI (1889) No 2.
\bibitem{Hartshorne}R.~Hartshorne. Algebraic geometry. Graduate Texts in Mathematics, 
No. 52. Springer-Verlag, New York-Heidelberg, (1977).
\bibitem{fredsoaz1}A.~Josse, F. P\`ene. On the degree of caustics by reflection. To appear in Communications in Algebra.
\bibitem{fredsoaz3}A.~Josse, F. P\`ene. {\em Degree and class of caustics by reflection for a generic source}. Preprint.
ArXiv:1301.1846.
\bibitem{Quetelet}L.~A.~J.~Quetelet, {\em \'Enonc\'es de quelques th\'eor\`emes nouveaux
sur les caustiques}.
C. G. Q. (1828) vol 1, p.14, p. 147-149.
\bibitem {Salmon-Cayley}G.~Salmon G, {\em A treatise on higher plane curves: 
Intended as a sequel to a treatise on conic sections}. Elibron classics (1934).
\bibitem{Trifogli} C.~Trifogli. {\em Focal Loci of Algebraic Hypersurfaces: a General Theory},
Geom. Dedicata 70 (1998), pp. 1--26.
\bibitem{Tschirnhausen}E. W. von Tschirnhausen, Acta Erud. nov. 1682.
\bibitem{Wall}C.~T.~C.~Wall. {\em Singular Points of Plane Curves}. 
Cambridge University Press. 2004.
\bibitem{Zariski} O.~Zariski. {\em Le probl\`eme des modules pour les branches planes}. 
Course given at the Centre de Math\'ematiques de l'\'Ecole Polytechnique, Paris, October-November 1973. 
With an appendix by Bernard Teissier. Second edition. Hermann, Paris, (1986) x+212 pp.
\end{thebibliography}
\end{document}